\newcommand{\C}{{\mathbb C}}
\newcommand{\R}{{\mathbb R}}
\newcommand{\Z}{{\mathbb {Z}}}
\newcommand{\T}{{\mathbb T}}
\newcommand{\Po}{{\mathcal P}}
\newcommand{\ba}{\begin{eqnarray}}
\newcommand{\ea}{\end{eqnarray}}
\newcommand{\bas}{\begin{eqnarray*}}
\newcommand{\eas}{\end{eqnarray*}}
\newcommand{\be}{\begin{equation}}
\newcommand{\ee}{\end{equation}}
\newcommand{\ii}{{\scriptstyle\sqrt{-1}}}
\newtheorem{theorem}{Theorem}
\newtheorem{proposition}[theorem]{Proposition}
\newtheorem{corollary}[theorem]{Corollary}
\newtheorem{lemma}[theorem]{Lemma}
\newenvironment{proof}[1][Proof]{\noindent\textbf{#1.} }{\ \rule{0.5em}{0.5em}}
\newtheorem{preacknowledgments}{Acknowledgments}
\newtheorem{preexample}[theorem]{Example}
\newenvironment{example}{\begin{preexample}\rm}{\hfill$\Diamond$\end{preexample}}
\newtheorem{preremark}[theorem]{Remark}
\newenvironment{remark}{\begin{preremark}\rm}{\hfill$\Diamond$\end{preremark}}
\newtheorem{preremarks}[theorem]{Remarks}
\newenvironment{remarks}{\begin{preremarks}\rm$\quad$}{\hfill$\Diamond$\end{preremarks}}
\newtheorem{prenotation}[theorem]{Notation}
\numberwithin{equation}{section}
\numberwithin{theorem}{section}
\begin{document}

\title{{Complex symplectomorphisms and pseudo-K\"ahler islands in the quantization of toric manifolds}}
\author{William D. Kirwin\thanks{Mathematics Institute, University of Cologne,
Weyertal 86 - 90, 50931 Cologne, Germany.\newline email:
will.kirwin@gmail.com}, Jos\'e M. Mour\~ao\thanks{Center
for Mathematical Analysis, Geometry and Dynamical Systems and the Department
of Mathematics, Instituto Superior T\'ecnico, Universidade de Lisboa, Av. Rovisco Pais, 1049-001
Lisbon, Portugal.\newline Lehrstuhl f\"ur Theoretische Physik III, Friedrichs-Alexander-Universit\"at, Erlangen-N\"urnberg\newline email: jmourao@math.tecnico.ulisboa.pt}, and Jo\~ao P. Nunes \thanks{Center
for Mathematical Analysis, Geometry and Dynamical Systems and the Department
of Mathematics, Instituto Superior T\'ecnico, Universidade de Lisboa, Av. Rovisco Pais, 1049-001
Lisbon, Portugal.\newline email: jpnunes@math.tecnico.ulisboa.pt}}
\maketitle

\date

\begin{abstract}
Let $P$ be a Delzant polytope. We show that the quantization of the corresponding toric manifold $X_{P}$ in toric K\"ahler polarizations and in the toric real polarization are related by analytic continuation of Hamiltonian flows evaluated at time $t = \ii s$. We relate the quantization of $X_{P}$ in two different toric K\"ahler polarizations by taking the time-$\ii s$ Hamiltonian ``flow'' of strongly convex functions on the moment polytope $P$. By taking $s$ to infinity, we obtain the quantization of $X_{P}$ in the (singular) real toric polarization.

Recall that $X_{P}$ has an open dense subset which is biholomorphic to $({\mathbb{C}}^{*})^{n}$. The quantization of $X_{P}$ in a toric K\"ahler polarization can also be described by applying the complexified Hamiltonian flow of the Abreu--Guillemin symplectic potential $g$, at time $t=\ii$, to an appropriate finite-dimensional subspace of quantum states in the quantization of $T^{*}{\mathbb{T}}^{n}$ in the vertical polarization. By taking other imaginary times, $t= k \ii, k\in {\mathbb{R}}$, we describe toric K\"ahler metrics with cone singularities along the toric divisors in $X_{P}$.

For convex Hamiltonian functions and sufficiently negative imaginary part of the complex time, we obtain degenerate K\"ahler structures which are negative definite in some regions of $X_{P}$. We show that the pointwise and $L^2$-norms of quantum states are asymptotically vanishing on negative-definite regions.

\end{abstract}
\tableofcontents

\section{Introduction}
\label{s1}

Let $(M,\omega)$ be a symplectic manifold such that $[\omega/2\pi]\in H^{2}(M,{\mathbb{Z}})$. The quantization of the classical phase space $(M,\omega)$ should produce an appropriate Hilbert space of quantum states. This is achieved in the framework of geometric quantization (see, for example, \cite{Woodhouse}) by considering prequantum data $(L,\nabla,h)$ which consists of a smooth complex line bundle $L\rightarrow X$ with hermitian structure $h$ and compatible connection $\nabla$ with curvature $-\ii \omega$. In addition to this data, one needs to specify a so-called polarization of $(M,\omega)$. This is an involutive Lagrangian distribution ${\Po}$ in the complexified tangent bundle of $M.$ The corresponding Hilbert space of quantum states ${\mathcal{ H}}_{\Po}$ consists of states $s$ satisfying $\nabla_Xs=0$ for all $X\in\Gamma(\Po).$

A central problem in geometric quantization is the study of the dependence of ${\mathcal{H}}_{\Po}$ on the choice of ${\Po}$. In particular, one looks for natural unitary identifications between the Hilbert spaces of quantum states associated to different polarizations.

Let $P$ be a Delzant polytope. The quantization of the corresponding compact symplectic toric manifold $X_{P}$ was studied in \cite{BFMN, KMN1}. There, it was shown that the quantization in the singular toric real polarization ${\Po}_{\mathbb{R}}$ associated to the fibers of the moment map
\[
\mu:X_{P}\rightarrow P
\]
can be continuously related to the quantization of $X_{P}$ in a toric invariant K\"{a}hler polarization determined by a toric invariant complex structure $I$. The K\"{a}hler structure $(X_{P},\omega,I)$ combines with the connection $\nabla$ to define an holomorphic structure on the prequantum bundle $L$, so that
\[
{\mathcal{H}}_{{\Po}_{I}}=H^{0}(X_{P},L),
\]
is the space of $I$-holomorphic sections of $L$. This space has a preferred basis of monomial sections, $\{\sigma^{m}:m\in P\cap\mathbb{Z}^{n}\}$, labeled by the integral points of $P$ (see, for example, \cite{Cox}). As shown in \cite{BFMN}, as the K\"{a}hler structure degenerates in an appropriate way, the K\"{a}hler polarizations converge to ${\Po}_{\mathbb{R}}$ while the $L^{1}$-normalized monomial holomorphic section $\sigma^{m},m\in P\cap{\mathbb{Z}}^{n}$, converges to a distributional section supported on $\mu^{-1}(m)$. These distributional sections generate ${\mathcal{H}}_{{\Po}_{\mathbb{R}}}$. In \cite{KMN1}, this continuous family of degenerating K\"{a}hler structures was further studied by the inclusion of the half-form correction, and it was shown that in this case the convergence to distributional sections is obtained by taking $L^{2}$-normalized holomorphic monomial sections.

An interesting way of changing polarization on a symplectic manifold is by pushforward by the flow of an Hamiltonian vector field. One then has explicit formulas for the time evolution along the flow of the polarized sections of the prequantum bundle $L$. (See, for example, the discussion of the Schr\"{o}dinger equation in \cite{K_qgf,SniatyckiGQ,Woodhouse}.) Recently, this idea has been extended to imaginary and complex time by considering the analytic continuation in time of the Hamiltonian flow. This seems to have a wide application in geometric quantization and sheds light on the relation between the quantization of symplectic manifolds in real and K\"{a}hler polarizations. Hamiltonian flows in complex time were studied in the context of quantum gravity by Thiemann \cite{Th1}. In \cite{HK,HK2}, the evolution of polarizations under complexified flows on cotangent bundles was studied.

Let $K$ be a compact Lie group. In \cite{KMN2,KMN3}, the quantization of the cotangent bundle $T^{\ast}K\cong K_{\mathbb{C}}$ with $K\times K$-invariant K\"{a}hler polarizations was studied by continuously relating it to the quantization in the vertical polarization via complexified Hamiltonian flows of convex $Ad$-invariant functions on $Lie(K)$. This gives a new perspective on the relation between the coherent state transform for Lie groups of Hall \cite{Ha1} and geometric quantization, as explored in \cite{Ha4,KW1,FMMN1,FMMN2,LS2,Lempert-Szoke10}. The case of abelian groups, with $K={\mathbb{T}}^{n},K_{\mathbb{C}}=({\mathbb{C}}^{\ast})^{n}$, is of particular relevance for the present paper's study of toric varieties, since these carry an holomorphic action of $({\mathbb{C}}^{\ast})^{n}$ with an open dense orbit.

Complexified Hamiltonian flows also play a fundamental role in recent studies on the geometry of the space of K\"ahler metrics and on the study of the relation between algebro-geometric notions of stability and the existence of constant scalar curvature metrics \cite{Se,CS,Do}. In fact, the rays of toric K\"ahler structures studied in \cite{BFMN, KMN1}, and which in the present paper are studied from the perspective of Hamiltonian evolution in complex time, are geodesic rays in the space of toric K\"ahler metrics. (See, for example, \cite{Rubinstein-Zelditch1,Rubinstein-Zelditch2,SZ}.)

In this paper, we apply Thiemann's complexifier method to the quantization of toric manifolds. Further applications in other contexts will appear in \cite{MN2}.

By taking a strongly convex function on $P$ as complexifier we relate the K\"ahler quantizations of $X_{P}$ along a geodesic ray of K\"ahler structures. By taking imaginary time to $+\ii\infty$, these K\"ahler quantizations are then related to the quantization of $X_{P}$ associated to ${\Po}_{\mathbb{R}}$. In this way, we rederive the results of \cite{BFMN, KMN1} through Hamiltonian evolution in complex time.

By taking this Hamiltonian flow to sufficiently negative imaginary time, one can create regions where the metric on $X_{P}$ is no longer positive definite. The polarized sections of $L$ then develop interesting behaviour since the derivatives of pointwise norms are very sensitive to the positivity of the metric.

Recall that $X_{P}$ has an holomorphic action of $({\mathbb{C}}^{*})^{n}$ with an open dense orbit $\check X_{P}$. There is an analog of the continuous family of K\"ahler polarizations of a cotangent bundle of a compact Lie group, starting at the vertical polarization, which was considered in \cite{FMMN1,FMMN2,KW2}. We use Thiemann's complexifier approach to define an isomorphism of a finite-dimensional subspace of the quantization of $T^{*}{\mathbb{T}}^{n}$ with respect to the vertical polarization with the K\"ahler quantization of $X_{P}$. In this case, the role of the complexifier is played by an Abreu--Guillemin symplectic potential $g$, which is smooth on $\check X_P$ but only continuous along $X_{P}$. By taking the complexified Hamiltonian flow for this singular Hamiltonian to imaginary time $k\ii, k\in{\mathbb{R}}$, we obtain singular K\"ahler structures on $X_{P}$ whose K\"ahler metrics have cone angles at the toric divisors in $X_{P}$.

\section{Preliminaries\label{s2} on toric manifolds}
\label{sec:prelim}

In this section, we review some facts concerning toric varieties. For more details see e.g. \cite{CdS,Abreu03,BFMN,KMN1}. Let $(X_{P},\omega)$ be a smooth compact symplectic toric manifold with symplectic form $\omega$ such that $\left[\frac{\omega}{2\pi}\right]-\frac{c_{1}(X_{P})}{2}\in H^{2}(X,{\mathbb{Z}})$. Denote the moment map by
\[
\mu:X\rightarrow Lie(\mathbb{T}^{n})^{\ast}\simeq\mathbb{R}^{n}
\]
and let $P=\mu(X_{P})$ be the (Delzant) moment polytope with associated fan $\Sigma$.

Let $\check{P}$ denote the interior of the moment polytope $P$. Consider action-angle coordinates $(x,\theta)$ on $\check{X}_P\cong\check{P}\times{\mathbb{T}}^{n}\cong({\mathbb{C}}^{*})^{n}$, so that $\mu({x},\theta) = x = \,(x^{1},\dots,x^{n})$. The symplectic form $\omega$ in this coordinate chart is simply
\begin{equation}
\left.  \omega\right\vert _{\mu^{-1}(\check{P})}=\sum_{j=1}^{n} dx^{j}\wedge d\theta_{j}. \label{eqn:interior symp}
\end{equation}
The condition $[\frac{\omega}{2\pi}]-\frac{c_{1}(X)}{2}\in H^{2}(X_{P},{\mathbb{Z}})$ allows us to choose the moment polytope $P= \mu(X)$ such that
\begin{equation}
\label{t11}
P = \left\{  x \in{\mathbb{R}}^{n} \ : \ \ell_{j}(x) = \nu_{j} \cdot x + \lambda^{j} \geq0, \ j = 1,\dots,r\right\},
\end{equation}
with
\begin{equation}
\label{halflambdas}
\lambda_{j} \in\frac12 + {\mathbb{Z}}, \,\,\, j=1\dots,r,
\end{equation}
and where $\nu_{j}$ is the primitive inward pointing normal vector to the $j$-th facet.

The formalism of Guillemin and Abreu describes toric K\"ahler structures in symplectic coordinates. A torus-invariant complex structure on $(X_{P},\omega)$ is determined by a symplectic potential $g=g_{P}+\varphi$ (also called an Abreu--Guillemin potential), where $g_{P}\in C^{\infty}(\check{P})$ is
\begin{equation}
g_{P}({x})=\frac{1}{2}\sum_{j=1}^{r}\ell_{j}({x})\log\ell_{j}({x}),
\label{eqn:gP}
\end{equation}
and $\varphi$ is a smooth function on $P$ such that the Hessian $H_{g_{P}+\varphi}$ is positive definite on $\check{P}$ and satisfies the regularity conditions
\begin{equation}
\det{H}_{g_{P}+\varphi}=\left(  \alpha({x})\prod_{j=1}^{r}\ell_{j}({x})\right)  ^{-1}, \label{eqn:greg}
\end{equation}
where $\alpha$ is smooth and strictly positive on $P$ \cite{Guillemin94,Abreu03}. Denote the set of such functions by $C_{+}^{\infty}(P)$ .

In the symplectic frame determined by the action-angle coordinates $({x,\theta})$ on $\check{X}_P$, the complex structure $I_{g}$ and the metric $\gamma_{g}=\omega(\cdot,I_{g}\cdot)$ associated to the symplectic potential $g$ are
\begin{equation}
\label{eqn:KahlerStruct}
I_{g}=
\begin{pmatrix}
0 & -H_{g}^{-1}\\
H_{g} & 0
\end{pmatrix}
\text{ and }\gamma_{g}=
\begin{pmatrix}
H_{g} & 0\\
0 & H_{g}^{-1}
\end{pmatrix}
.
\end{equation}

The symplectic potential $g$ fixes a biholomorphism $(\check{X}_{P},I_{g})\cong\check{P}\times\mathbb{T}^{n}\cong(\mathbb{C}^{\ast})^{n}$ given by
\begin{equation}
\label{eqn:w}
({x,\theta)}\in\check{P}\times\mathbb{T}^{n}\mapsto w=(w_{1},\dots,w_{n}) =e^{y+i{\theta}}=(e^{y_{1}+i\theta_{1}},\dots,e^{y_{n}+i\theta_{n}})\in(\mathbb{C}^{\ast})^{n},
\end{equation}
where $y_{j}=\partial g/\partial x^{j}$. The map ${x}\mapsto{y}=\partial g/\partial{x}$ \ is a bijective Legendre transform. The inverse map is given by ${x}=\partial h/\partial{y}$, where $h$ is a K\"{a}hler potential given in terms of $g$ by
\[
h:={x}\cdot{y}-g.
\]
This biholomorphism extends uniquely to a biholomorphism of $X_{P}$ with a complex toric variety $Y$ with fan $\Sigma$ associated to $P$. Henceforth, we will assume that $X_{P}$ is equipped with a K\"{a}hler structure $(X_{P},\omega,I_{g},\gamma_{g})$ determined by $\omega$ and by a symplectic
potential $g$.

The Picard group of $X_{P}$ is generated by linear equivalence classes of the torus invariant irreducible divisors $D_{1},\dots,D_{r}$, where $D_{j}, j=1,\dots, r$, is the toric divisor corresponding to the preimage under the moment map $\mu$ of the $j$-th facet of $P$.

Consider a divisor $D^{L}=\lambda_{1}^{L} D_{1}+\dots+\lambda_{r}^{L} D_{r}$, for $\lambda_{1}^{L},\dots,\lambda_{r}^{L} \in{\mathbb{Z}}$, defining a holomorphic line bundle $L=\mathcal{O}(D^{L})$ on $X_{P}$. Let $\sigma_{D^{L}}$ be the (unique up to constant) meromorphic section of $L$ with divisor $D^{L}$.

From \cite{Cox}, the divisor of the (rational) function defined on the open orbit by
\[
w^{m}=w_{1}^{m_{1}}\cdots w_{n}^{m_{n}},\,\, m=(m_{1},\dots,m_{n})\in{\mathbb{Z}}^{n},
\]
can be computed to be
\begin{equation}
\label{eqn:Coxformula}
\operatorname{div}(w^{m}) =\sum_{j=1}^{r} \left\langle \nu_{j},m\right\rangle
D_{j}.
\end{equation}
One then has
\begin{align}
\label{hzero}
H^{0}(X_P,L)  &  = \mathrm{span}_{\mathbb{C}} \left\{  w^{m} \sigma_{D^{L}}: m\in{\mathbb{Z}}^{n},\,\operatorname{div}(w^{m} \sigma_{D^{L}}) \geq0 \right\}\nonumber\\
&  = \mathrm{span}_{\mathbb{C}} \left\{  w^{m} \sigma_{D^{L}}: m\in
{\mathbb{Z}}^{n}, \langle m,\nu_{i}\rangle+\lambda_{i}^{L}\geq0, i=1,\dots,r
\right\}.
\end{align}
Therefore, there is a natural bijection between the torus invariant basis of $H^{0}(X_{P},L)$ and the integral points of the Delzant polytope $P_{L}$,
\begin{equation}
\label{eqn:p_Ldef}
P_{L}:=\{x\in{\mathbb{R}}^{n}:\left\langle x,\nu_{j}\right\rangle +\lambda_{j}^{L} \geq0,~j=1,\dots,r\}\subset{\mathbb{R}}^{n}.
\end{equation}

For simplicity, let us assume that $L$ is ample so that there is a canonical bijection between the vertices of $P_{L}$ and the vertices of $P$, defined by the equality of the set of normals of the facets meeting at those vertices.

Let $L\to X_{P}$ be the smooth line bundle with first Chern class
\[
c_{1}(L)= \left[\frac{\omega}{2\pi}\right]-\frac{c_{1}(X_{P})}{2},
\]
equipped with a  prequantum connection $\nabla$ with curvature $-\ii\left(\omega-\tfrac12\rho\right)$, where $\rho$ is the Ricci form. $L$ is the ``half-form'' corrected prequantum line bundle, as in \cite{KMN1}. Recall that $[-\frac{\rho}{2\pi}]$ is the Chern class of the canonical bundle $K_{X_{P}}$ of $X_{P}$ for a compatible complex structure; if $c_{1}(X_{P})$ is even, then $L$ is the tensor product of the prequantum line bundle of curvature $-\ii\omega$ with a square root of $K_{X_{P}}$.

With the above choice of moment polytope, there are no integral points on the boundary of $P$ and to each integral point inside $P$ there corresponds a half-form corrected K\"ahler polarized state, \textit{i.e.} a holomorphic section in $H^{0}(X_{P},L)$. Such sections are represented as products \begin{equation}
\label{secs}
s^{m} = \sigma^{m} \otimes\sqrt{dZ}, \,\, m\in P\cap{\mathbb{Z}}^{n}.
\end{equation}
of multivalued sections $\sigma^{m} $ corresponding to the integral points of $P$ with a canonical multivalued section $\sqrt{dZ}$ of the square root of the canonical bundle $\sqrt{K_{X_{P}}}$ with divisor
\[
div(\sqrt{dZ}) = - \frac12 \left(  D_{1} + \dots+ D_{r}\right).
\]

\begin{remark}
In \cite{KMN1}, the inclusion of half-forms was treated in a different, but equivalent, way involving a separation of the ``norm'' and ``phase'' parts of the (would-be) half-form bundle. This treatment was necessary because the square root of the canonical bundle of a toric manifold may not always exist.  It is straightforward to check that this formalism yields the same results as we obtain here, where by writing the half-form corrected monomial sections as $w^{m}e^{-h}{\mathbbm 1}^{U(1)}\otimes\sqrt{dZ}$ one is implicitly allowing each of the factors in the tensor product to be a ramified section. In any case, the tensor product of the two factors is well defined and is a smooth section of a line bundle with Chern class $\left[\frac{\omega}{2\pi}\right]-\frac{c_{1}(X_{P})}{2}$.
\end{remark}

\section{Complex-time evolution in geometry}
\label{sec:cpxTgeom}

In this section, we briefly review some aspects of the general theory of complex-time Hamiltonian evolution in geometry. We first describe complex-time evolution in cotangent bundles and then in more general symplectic manifolds.

\subsection{The Thiemann complexifier method}

The idea, which we describe now, is due to the work of Thiemann \cite{Th1} and has come to be known as the Thiemann complexifier method. Consider a cotangent bundle $T^{\ast}M$ equipped with its canonical symplectic structure. Fix a function $h\in C^{\infty}(T^{\ast }M)$, called the \emph{complexifier function}. Let $X_h$ be its Hamiltonian vector field. A complex structure is defined on $T^{\ast}M$ by declaring that given a function $f:M\rightarrow\mathbb{C}$, the function
\begin{equation}
\label{eqn:Thiemann1}
f_{\mathbb{C}}:=e^{\ii X_h}f
\end{equation}
is \emph{holomorphic.} Here, $e^{\ii X_h}$ can be understood as a power series $\sum_{j=1}^{\infty}\frac{\ii^{j}}{j!}X_{h}^{j}f$. One may interpret (\ref{eqn:Thiemann1}) as saying that holomorphic functions are defined to be the composition of vertically constant functions with the time-$\ii$ Hamiltonian ``flow'' of $h$ (which is, of course, not a flow in the usual sense).
Since $X_h$ is a derivation of the Poisson bracket, we have that for any two functions $f_{1},f_{2}:M\rightarrow\mathbb{C}$,
\[
\{e^{\ii X_h}f_{1},e^{\ii X_h}f_{2}\}=0,
\]
whence the symplectic form is type $(1,1)$ with respect to the complex structure.

In general there is no reason that (\ref{eqn:Thiemann1}) should be a convergent power series, and indeed in general it is not. Nevertheless, the Thiemann complexifier method can be made rigorous in many interesting situations.

The first fundamental observation is that the geodesic flow on a compact real-analytic Riemannian manifold $M$ can be analytically continued and the resulting geometry is the adapted complex structure due to Lempert and Sz\H{o}ke, and to   Guillemin and Stenzel \cite{LS, Szoke-91, GS1, GS2}. There are many ways to describe a complex structure on a symplectic manifold---in terms of its $(1,0)$-tangent bundle, the almost complex tensor $J$, holomorphic functions, or holomorphic sections, for example---and for each of these descriptions, there is a version of the following theorem (due to the first author and Hall \cite{HK}) which explains precisely what it means to analytically continue the geodesic flow. We state here a version in terms of the $(1,0)$-bundle, i.e., in terms of the corresponding holomorphic polarization of $T^*M$, as that is how we will primarily describe toric invariant complex structures. Since the geodesic flow is the Hamiltonian flow of half the norm-squared function, the content of the following theorem, at least in the present context, is that the Thiemann complexifier method ``works'' if one takes $E(x,p):=\frac{1}{2}\left\vert p\right\vert ^{2}$ as a complexifier function.

Denote the time-$\sigma$ geodesic flow by $\Phi_{\sigma}:T^{\ast}M\rightarrow T^{\ast}M$ and the vertical tangent bundle of $T^*M$ by $\operatorname{Vert}(T^*M).$

\begin{theorem}\
\label{thm:HKmain}
\cite[Theorem 1]{HK}
For every $\varepsilon>0$ there exists $R>0$ such that for each $(x,p)\in T^{\ast}M$ with $\left\vert p\right\vert <R,$ the family of Lagrangian subspaces
\[
\sigma\in\mathbb{R\mapsto} \mathcal{P}_{\sigma}(x,p):=\left(  \Phi_{-\sigma}\right)_{\ast}\operatorname{Vert}_{(x,p)}^{\mathbb{C}}(T^{\ast}M) \subset T_{(x,p)}^{\mathbb{C} }T^{\ast}M
\]
can be analytically continued to a disk $D_{1+\epsilon}$ of radius $1+\varepsilon$ about the origin in $\mathbb{C}$. The resulting complex Langrangian distribution $\mathcal{P}_\ii$ on $T^{\ast,R}M:=\{(x,p):\left\vert p\right\vert <R\}$ is integrable, satisfies $P(\ii)\cap\overline{P(\ii)}=\{0\}$, and is negative with respect to the canonical symplectic form; that is, $\mathcal{P}_\ii$ is the $(0,1)$-tangent bundle of a K\"{a}hler complex structure on $T^{\ast,R}M$.
\end{theorem}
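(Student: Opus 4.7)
The plan is to reduce the statement to the existence of the adapted complex structure of Lempert--Sz\H{o}ke and Guillemin--Stenzel on a Grauert tube around the zero section of $T^{\ast}M$, and then identify the imaginary-time pushforward of the vertical polarization with its $(0,1)$-distribution. The enabling observation is the homogeneity of the geodesic flow: because $\Phi_\sigma(x,p)$ depends on $(\sigma,p)$ only through $\sigma p$ in an appropriate sense, analytic continuation in $\sigma$ at fixed momentum $p$ is equivalent to a rescaling of the initial momentum, so a uniform bound on $|p|<R$ translates into a uniform lower bound on the radius of convergence in $\sigma$.

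Since $(M,g)$ is compact and real-analytic, there is $R_0>0$ such that $T^{\ast,R_0}M$ admits the adapted K\"ahler structure $(\omega,J_{\mathrm{ad}})$, and for each $x\in M$ the exponential map $\exp_x$ extends holomorphically to the complex ball of radius $R_0$ in $T_xM\otimes\mathbb{C}$. Passing to the cotangent bundle via $g$ yields a holomorphic extension of $\sigma\mapsto\Phi_\sigma(x,p)$ to the disc $\{|\sigma|<R_0/|p|\}$, so choosing $R:=R_0/(1+\varepsilon)$ forces this disc to contain $D_{1+\varepsilon}$ whenever $|p|<R$. The pushforward of a vertical vector by $\Phi_{-\sigma}$ is the value at $\sigma=0$ of a Jacobi field along the base geodesic; working in a parallel-transported frame, this pushforward is encoded by the fundamental solution of the Jacobi ODE, whose coefficients are real-analytic in $\sigma$, so the family $\sigma\mapsto\mathcal{P}_\sigma(x,p)\subset T^{\mathbb{C}}_{(x,p)}T^{\ast}M$ extends holomorphically to $D_{1+\varepsilon}$.

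At $\sigma=\ii$ I would identify $\mathcal{P}_\ii(x,p)$ with $T^{0,1}_{(x,p)}(T^{\ast,R}M,J_{\mathrm{ad}})$ using the characterization of $J_{\mathrm{ad}}$ as the unique complex structure on $T^{\ast,R_0}M$ for which every complexified geodesic disc $a+b\ii\mapsto \Phi_a(x,bp)$ is holomorphic. Once this identification is established, the three remaining conclusions follow from standard properties of the adapted K\"ahler structure: integrability of $J_{\mathrm{ad}}$, the transversality $T^{0,1}\cap T^{1,0}=\{0\}$, and positivity of the adapted K\"ahler metric $\gamma_{\mathrm{ad}}=\omega(\cdot,J_{\mathrm{ad}}\cdot)$ with respect to the canonical symplectic form.

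The main obstacle is the identification $\mathcal{P}_\ii=T^{0,1}J_{\mathrm{ad}}$: one must confirm that the imaginary-time pushforward of the vertical is precisely the $(0,1)$-bundle of the adapted complex structure rather than some a priori unrelated complex Lagrangian distribution. The cleanest way I see to settle this is to match the fundamental matrix of the complexified Jacobi ODE at $\sigma=\ii$ against the explicit formula for $T^{0,1}J_{\mathrm{ad}}$ provided by the Guillemin--Stenzel construction; everything else then reduces to routine bookkeeping.
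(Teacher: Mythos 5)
The paper does not actually give a proof of this theorem: it is quoted verbatim with attribution to \cite[Theorem 1]{HK}, so there is no in-paper argument to compare against. Your plan is, however, close in spirit to the argument in the cited reference (Hall--Kirwin, \emph{Adapted complex structures and the geodesic flow}), whose whole point is to realize the Lempert--Sz\H{o}ke/Guillemin--Stenzel adapted complex structure as the imaginary-time pushforward of the vertical polarization, with Jacobi fields providing the concrete bookkeeping. The quantitative reduction via the homogeneity of the geodesic flow (uniform $R$ from the Grauert-tube radius $R_0$ together with $\Phi_\sigma(x,\lambda p)$ being a $\lambda$-dilation in $\sigma$ of the base point and a fiber rescaling) is the right mechanism, though you should spell out that the fiber part carries an extra factor of $\lambda$, so the statement ``depends only on $\sigma p$'' is literally correct only for the base projection.

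There is, however, a genuine error at the crux of your identification step. You characterize the adapted structure $J_{\mathrm{ad}}$ as the unique complex structure making every map $a+b\ii\mapsto\Phi_a(x,bp)$ holomorphic. This is not the Lempert--Sz\H{o}ke characterization and it is not true. The Lempert--Sz\H{o}ke condition is that for every unit-speed geodesic $\gamma$ the map $N_\gamma:\sigma+\ii\tau\mapsto\tau\,\dot\gamma(\sigma)$ (equivalently $\tau\,\dot\gamma(\sigma)^\flat$ in $T^{\ast}M$) is holomorphic. Your map $\Phi_a(x,bp)$ equals $(\gamma(ab),\,b\,\dot\gamma(ab)^\flat)$, which is $N_\gamma$ precomposed with $(a,b)\mapsto(ab,b)$ --- a nonholomorphic change of coordinates on $\mathbb{C}$ --- and so is \emph{not} $J_{\mathrm{ad}}$-holomorphic in general; indeed its restriction to the real axis collapses to the single point $(x,0)$. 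Since you explicitly defer the main identification $\mathcal{P}_\ii=T^{0,1}J_{\mathrm{ad}}$ to this characterization, this is a real gap, not a cosmetic one. The fix is to use the correct $N_\gamma$ characterization (or the Monge--Amp\`ere characterization of Guillemin--Stenzel) and then carry out the Jacobi-field computation you outline to show the analytically continued pushforward at $\sigma=\ii$ coincides with $T^{0,1}J_{\mathrm{ad}}$; with that substitution the rest of your plan is sound and follows the cited reference.
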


An equivalent theorem is that each real-analytic metric on $M$ provides, via Thiemann complexification with respect to the associated norm-squared function, a biholomorphism of a tubular neighborhood of $M$ in $T^{\ast}M$ with a tube in the Bruhat--Whitney complexification of $M$.

\bigskip
In \cite{KMN2, KMN3}, the authors studied Thiemann's complexifier method in the case of $T^{\ast}K$ where $K$ is a compact Lie group. The main geometric result is that there is an infinite-dimensional family $\mathfrak{H}(K)$ of Hamiltonians on $T^{\ast}K$ (which includes $E$) for which Thiemann complexification makes sense. Here, we can state the result precisely in terms of the complexification $K_{\mathbb{C}}$: each of the Hamiltonians in the family defines a diffeomorphism $T^{\ast}K\simeq K_{\mathbb{C}}$ such that the pullback of the standard complex structure on $K_{\mathbb{C}}$ is compatible with the canonical symplectic structure on $T^{\ast}K$.

One final remark before we return to the case at hand---toric manifolds---is that there is nothing really special about time $\ii$. In all of the above cases, the time-$\ii t$ Thiemann complexification works just as well as long as $t>0$. Moreover, the limit as $t\rightarrow0$ of the time-$\ii t$ complex structure yields the vertical polarization; it is a kind of degeneration of the complex structure. Lifting this to the quantum level yields what we call ``decomplexification''.

As $t$ becomes negative, though, the resulting complex structures are generally negative with respect to the symplectic form, and quantization with respect to a negative complex structure is generally not well behaved. However, one of the main results of this article is to show that for a large family of complexifiers on toric manifolds (the Abreu--Guillemin potentials), one can in fact consider $t<0$ and, although the resulting geometry include negative K\"ahler regions, we will see below in Section \ref{sssik} the associated quantization is nevertheless well behaved.

\subsection{General complex-time evolution}

Complexified Hamiltonian flows in more general symplectic manifolds have been used both in quantization and geometry. In the context of K\"ahler geometry, they describe geodesics on the space of K\"ahler metrics in a fixed cohomology class $[\omega/2\pi]\in H^2(M)$, with respect to the Donaldson--Mabuchi--Semmes metric. Hence, they play an important role in the question of existence and uniqueness of K\"ahler metrics with special properties, such as constant scalar curvature metrics or K\"ahler--Einstein metrics, and are related with properties of algebro-geometric stability. (For reviews, see \cite{Do,PS}.)  In the context of geometric quantization, as described above and below, complexified Hamiltonian flows can be used to relate quantizations of a given symplectic manifold $(M,\omega)$ with respect to different polarizations. For example, in some cases, one can relate quantizations in a real polarization with quantizations along families of K\"ahler polarizations.

In \cite{MN1}, the following approach to complex-time flows was taken. On $(M,J_0)$, one acts with the complex-time $\tau$ flow $e^{\tau X_h}$ (of a real-analytic vector field $X$) directly on systems of local $J_0$-holomorphic coordinates, thereby changing the sheaf of holomorphic functions and obtaining a new complex structure $(M,J_\tau)$ which is biholomorphic to the initial one. In general, for compact $M$, this procedure is well defined so long as $|\tau|$ is sufficiently small. In the case of a K\"ahler manifold $(M,\omega,J_0)$ and of the flow of a real-analytic Hamiltonian vector field $X_h$, this construction yields a local one-parameter family of inequivalent K\"ahler structures $(M,\omega,J_\tau)$.

\begin{theorem}\ \cite{MN1}
Let $(M,\omega,J_0)$ be a compact real-analytic K\"ahler manifold (here, $\omega$ and $J_0$ are real analytic). Let $h\in C^\omega(M)$. There exists $T>0$ such that for all $\tau \in \C,\,\, |\tau|<T$,
\begin{enumerate}
\item[a)] the action of the complex-time Hamiltonian flow of $X_h$ on the sheaf of $J_0$-holomorphic functions defines a K\"ahler structure $(M,\omega,J_\tau)$, and
\item[b)] the corresponding K\"ahler polarization is given by
\[
{\mathcal P}_\tau = e^{\tau {\mathcal L}_{X_h}} {\mathcal P}_0,
\]
where ${\mathcal P}_0$ is the K\"ahler polarization defined by $J_0$. The polarization $\Po$ can be interpreted as a convergent power series in $\tau$ if $e^{\tau {\mathcal L}_{X_h}}$ is applied to appropriate local sections of ${\mathcal P}_0$, such as $\{X_{z_i}\}_{i=1,\dots,n}$, where $\{{z_i}\}_{i=1,\dots,n}$ is a system of local $J_0$-holomorphic coordinates.
\end{enumerate}
\end{theorem}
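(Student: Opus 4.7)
The plan is to build $J_\tau$ chart-by-chart by applying $e^{\tau X_h}$ to local $J_0$-holomorphic coordinates, glue via the naturality of the flow with respect to holomorphic composition, and then check the Kähler conditions are preserved. Concretely, I cover the compact manifold $M$ by finitely many open sets $U_i$ equipped with $J_0$-holomorphic coordinate systems $z_i = (z_i^1,\dots,z_i^n)$, with $J_0$-holomorphic transitions $z_i = \varphi_{ij}(z_j)$. On each $U_i$ I define the complexified coordinates
\begin{equation*}
z_{i,\tau}^k \;:=\; e^{\tau X_h}\, z_i^k \;=\; \sum_{j\ge 0} \frac{\tau^j}{j!}\, X_h^j(z_i^k),
\end{equation*}
and declare them $J_\tau$-holomorphic, i.e.\ let their differentials span the $(1,0)$-cotangent bundle of $J_\tau$ (equivalently, $\Po_\tau := T^{0,1}_{J_\tau} M$ is spanned by the Hamiltonian vector fields $X_{z_{i,\tau}^k}$).

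The first substantive step is convergence and gluing. Real-analyticity of $h$, $\omega$, $J_0$, and of the chosen charts, together with compactness of $M$, extend all data holomorphically to a tubular neighborhood of $M$ in its Bruhat--Whitney complexification; the corresponding holomorphic flow produces Cauchy-type bounds $|X_h^j(z_i^k)(p)|\le C\, j!/R^j$ uniformly in $p$ and in the finitely many charts, yielding a common radius $T>0$ for which each series defining $z_{i,\tau}^k$ converges uniformly on compact subsets of $U_i$ for $|\tau|<T$. On overlaps, for real $\tau$ the flow acts by pullback, $e^{\tau X_h}f = f\circ \Phi_\tau$, so the identity $e^{\tau X_h}(\varphi_{ij}\circ z_j) = \varphi_{ij}\circ(e^{\tau X_h} z_j)$ holds for real $\tau$ and extends by analyticity to $|\tau|<T$, giving $z_{i,\tau} = \varphi_{ij}(z_{j,\tau})$. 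Hence the complexified coordinates glue to a global almost complex structure $J_\tau$, and integrability is automatic because the transitions in the new coordinates are again given by the holomorphic maps $\varphi_{ij}$.

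For part (a) it remains to check that $\omega$ is of type $(1,1)$ with respect to $J_\tau$ and that the resulting Hermitian form is positive. Since $X_h$ is a derivation of the Poisson bracket,
\begin{equation*}
\omega\bigl(X_{z_{i,\tau}^k},\,X_{z_{i,\tau}^l}\bigr) \;=\; \{z_{i,\tau}^k, z_{i,\tau}^l\} \;=\; e^{\tau X_h}\{z_i^k,z_i^l\} \;=\; 0,
\end{equation*}
because $\{z_i^k,z_i^l\}=0$ expresses that $\omega$ is type $(1,1)$ for the Kähler structure $(M,\omega,J_0)$. Thus $\omega(\Po_\tau,\Po_\tau)=0$. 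Positivity of the Hermitian form $\omega(\cdot,J_\tau\cdot)$ is an open condition satisfied at $\tau=0$, so shrinking $T$ once more yields a genuine Kähler structure $(M,\omega,J_\tau)$ for $|\tau|<T$. Part (b) follows from $\mathcal{L}_{X_h}\omega = 0$, which via the identity $[X_h,X_f]=X_{\{h,f\}}$ gives $e^{\tau \mathcal{L}_{X_h}} X_f = X_{e^{\tau X_h} f}$; applied to $f = z_i^k$ this says $\Po_\tau = e^{\tau \mathcal{L}_{X_h}}\Po_0$ locally, with convergence of the power series on the local frame $\{X_{z_i^k}\}$ inherited from convergence of $e^{\tau X_h} z_i^k$.

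The main obstacle is producing the uniform $T$: a single radius must simultaneously ensure (i) convergence of the series $e^{\tau X_h} z_i^k$ in all finitely many charts, (ii) analytic extension of the compositional identities to the disk $|\tau|<T$ so that the $J_\tau$-charts are consistent, and (iii) positivity of $\omega(\cdot,J_\tau\cdot)$. All three are controlled by compactness of $M$ combined with the Cauchy estimates coming from real-analyticity; once these are in hand, the remainder of the proof consists of identities which are formal on the level of Taylor series and thus automatic on the disk of convergence.
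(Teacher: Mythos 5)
Your proposal is correct and follows exactly the chart-by-chart construction that the paper sketches in the paragraph preceding the theorem (the theorem itself is stated without proof and attributed to \cite{MN1}): apply $e^{\tau X_h}$ to local $J_0$-holomorphic coordinates, observe that real-analyticity and compactness give a common radius of convergence, glue via the unchanged holomorphic transitions $\varphi_{ij}$, and use the fact that $e^{\tau X_h}$ is a Poisson-algebra homomorphism to show the type-$(1,1)$ condition is preserved, with positivity an open condition handled by shrinking $T$. One small point you leave implicit: before you may ``declare'' the $z_{i,\tau}^{k}$ to be $J_\tau$-holomorphic coordinates, you must also check that $\{dz_{i,\tau}^{k},\,d\bar z_{i,\tau}^{k}\}$ remains a coframe (equivalently, that $\Po_\tau \cap \overline{\Po_\tau}=\{0\}$ and $\dim_\C \Po_\tau = n$); this is again an open condition at $\tau=0$ and is absorbed into the same compactness-plus-shrinking argument you already invoke for positivity, but it logically precedes the positivity statement and should be said.
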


Moreover, one can obtain an explicit formula for the time $\tau$ evolution of the K\"ahler potential on $M$. This approach to the description of complexified Hamiltonian flows is related to the one explored in \cite{BLU}, where the complexified flow is constructed first in a complexification $M_\C$ of $M$ and then projected to $M$.

Below, we will see how complex-time Hamiltonian evolution can be applied in the context of toric geometry and subsequently in the quantization of symplectic toric manifolds.

\subsection{Complex-time flow of toric geometries}
\label{sstfp}

Let $g$ be a symplectic potential for $X_{P}$ and let ${\Po}_{g} = T^{0,1}X_{P}$ be the corresponding K\"ahler polarization of $(X_{P},\omega)$, so that
\[
{\Po}_{g} =\mathrm{span}_{\mathbb{C}}\,\,\left\{  \frac{\partial
}{\partial \bar z_{j}}, j=1,\dots,n\right\}  ,
\]
where $z_{j}=y_{j}+i\theta_{j}, j=1,\dots,r$ with $y_j = \partial g/\partial x^j.$ Let ${\Po}_{\mathbb{R}}$ be the (singular) toric real polarization of $(X_{P},\omega)$ given at each point by the complexified tangent space to the fibers of the moment map $\mu$,
\[
{\Po}_{\mathbb{R}} = \mathrm{span}_{\mathbb{C}}\,\,\left\{
\frac{\partial}{\partial\theta_{j}}, j=1,\dots,n\right\}  .
\]

Let $\psi$ be a smooth strongly convex function on $P$ so that $s\psi\in C^{\infty}_{+}(P)$ for all $s>0$. In \cite{BFMN}, the family of toric K\"ahler structures determined by $g_{s} = g +s\psi$ was studied.

\begin{lemma}\ \cite{BFMN}
Pointwise, in the positive Lagrangian Grassmannian of $(\check{X}_{P},\omega)$, \[
lim_{s\to\infty} {\Po}_{s} = {\Po}_{\mathbb{R}}.
\]
\end{lemma}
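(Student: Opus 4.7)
The plan is to write both $\Po_s$ and $\Po_\R$ explicitly in the global constant frame $\{\partial/\partial x^k,\partial/\partial\theta_j\}_{k,j=1}^n$ on $\check X_P\cong\check P\times\T^n$ and to show that, after a suitable $s$-dependent rescaling, a basis of $\Po_s$ converges in $\C^{2n}$ to the basis $\{\partial/\partial\theta_j\}$ of $\Po_\R$. Convergence of bases implies convergence in the Lagrangian Grassmannian.

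Concretely, I would first use the defining relation $y_j^{(s)}=\partial g_s/\partial x^j$, which together with the chain rule gives $\partial/\partial y_j^{(s)}=\sum_k(H_{g_s}^{-1})_{jk}\,\partial/\partial x^k$. Since $z_j^{(s)}=y_j^{(s)}+i\theta_j$, one obtains
\[
\frac{\partial}{\partial\bar z_j^{(s)}}=\frac12\sum_k(H_{g_s}^{-1})_{jk}\frac{\partial}{\partial x^k}+\frac{i}{2}\frac{\partial}{\partial\theta_j}.
\]
Multiplying by $-2i$ gives a new basis
\[
v_j^{(s)}:=-i\sum_k(H_{g_s}^{-1})_{jk}\frac{\partial}{\partial x^k}+\frac{\partial}{\partial\theta_j}
\]
of $\Po_s$ whose $\partial/\partial\theta_j$ component is normalized to one.

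Next, I would use that $g_s=g+s\psi$ implies $H_{g_s}=H_g+sH_\psi$. Strong convexity of $\psi$ means $H_\psi$ is positive definite on $P$, so on any compact subset $K\subset\check P$ there exists $c_K>0$ with $H_\psi\geq c_K\,\mathrm{Id}$. Combined with $H_g$ being positive definite on $\check P$, this yields $H_{g_s}\geq sc_K\,\mathrm{Id}$, hence $\|H_{g_s}^{-1}\|=O(1/s)$ uniformly on $K$ as $s\to\infty$. Therefore $v_j^{(s)}\to\partial/\partial\theta_j$ pointwise, and the span $\mathrm{span}_\C\{v_j^{(s)}\}=\Po_s$ converges to $\mathrm{span}_\C\{\partial/\partial\theta_j\}=\Po_\R$ in the Grassmannian of complex Lagrangian subspaces of $T^\C_{(x,\theta)}\check X_P$.

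There is no substantive obstacle: the argument is a bookkeeping exercise once one has chosen the correct rescaling (the factor of $-2i$) so that the limiting vectors live in the tangent fibres of the moment map rather than blowing up or collapsing to zero. The only input beyond direct computation is the uniform lower bound on $H_\psi$ coming from strong convexity, which guarantees $H_{g_s}^{-1}\to 0$ and thus pins down $\Po_\R$ as the limit.
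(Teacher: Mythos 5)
Your argument is correct. The paper does not reprove this lemma (it cites \cite{BFMN}), but your computation is the natural one and, up to presentation, is exactly what is done there: express the antiholomorphic frame in the constant $(x,\theta)$ coordinates via the Legendre transform, observe that $H_{g_s}^{-1}\to 0$ because $H_{g_s}=H_g+sH_\psi\geq s H_\psi\geq sc\,\mathrm{Id}$ by strong convexity, and conclude that the normalized frame converges to $\{\partial/\partial\theta_j\}$, hence the spans converge in the Lagrangian Grassmannian. One small remark: the factor $-2i$ is convenient but not essential; even the unnormalized vectors $\partial/\partial\bar z_j^{(s)}\to \tfrac{i}{2}\partial/\partial\theta_j$ converge to a nondegenerate frame, so no careful rescaling is actually required. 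Also, since $\psi$ is strongly convex on the compact set $P$ and $H_\psi$ is continuous there, the bound $H_\psi\geq c\,\mathrm{Id}$ is uniform on all of $P$; you do not need to pass to compact subsets of $\check P$ (though this is harmless for the stated pointwise convergence).
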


Let $X_{\psi}= -\sum_{j=1}^{n} \frac{\partial\psi}{\partial x^{j}} \frac{\partial}{\partial\theta_{j}}$ be the Hamiltonian vector field for $\psi$. The following is a toric analog of Theorem 3.10 of \cite{KMN1}.

\begin{theorem}
\label{proppolspsi}
Let $s>0$.
\begin{enumerate}
\item[a)] As distributions, ${\Po}_{g_{s}} = e^{\ii s{\mathcal{L}}_{X_{\psi}}}{\Po}_{g}$.

\item[b)] If $dZ_{s}$ denotes the $I_{g_s}$-meromorphic section of $K_{X_{P}}$ with divisor $-(D_{1}+\cdots+D_{r})$, then $dZ_{s} = e^{\ii s {\mathcal{L}}_{X_{\psi}}} dZ,$ where the right-hand side makes sense (pointwise) as a power series in $s$.
\end{enumerate}
\end{theorem}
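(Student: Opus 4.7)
The plan is to work in action-angle coordinates on the open dense orbit $\check X_P\cong\check P\times\mathbb{T}^n$, exploit the purely angular form of $X_\psi=-\sum_j\psi_j\,\partial/\partial\theta_j$ (with $\psi_j:=\partial\psi/\partial x^j$) to show that the formal power series defining $e^{\ii s\mathcal{L}_{X_\psi}}$ truncates after one term when applied to the differentials $dz_j$, to identify the result with $dz_j^{(s)}$, and then to extend by continuity from $\check X_P$ to $X_P$. Three observations drive the computation: $X_\psi$ annihilates every function of $x$ alone (it is purely angular with $x$-dependent coefficients), $\iota_{X_\psi}dx^k=0$ so $\mathcal{L}_{X_\psi}dx^k=0$, and $[X_\psi,\partial/\partial\theta_l]=0$ since the coefficients $\psi_m$ depend only on $x$.

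The central computation is the action of $\mathcal{L}_{X_\psi}$ on $dz_j=\sum_k g_{jk}\,dx^k+\ii\,d\theta_j$. Cartan's formula together with $dz_j(X_\psi)=-\ii\psi_j$ gives $\mathcal{L}_{X_\psi}dz_j=-\ii\,d\psi_j=-\ii\sum_k\psi_{jk}dx^k$, and a second application vanishes since $\psi_{jk}$ depends only on $x$ and $\mathcal{L}_{X_\psi}dx^k=0$. Hence the series truncates after a single term:
\[
e^{\ii s\mathcal{L}_{X_\psi}}dz_j\;=\;dz_j+\ii s\,\mathcal{L}_{X_\psi}dz_j\;=\;\sum_k(g_{jk}+s\psi_{jk})\,dx^k+\ii\,d\theta_j\;=\;dz_j^{(s)}.
\]
A parallel calculation shows $\mathcal{L}_{X_\psi}(\partial/\partial\bar z_j)=\tfrac12\sum_l(H_g^{-1}H_\psi)_{jl}\,\partial/\partial\theta_l$ and $\mathcal{L}_{X_\psi}^2(\partial/\partial\bar z_j)=0$, so $e^{\ii s\mathcal{L}_{X_\psi}}(\partial/\partial\bar z_j)$ is itself a polynomial in $s$ and can be checked pointwise to lie in $\Po_{g_s}$.

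Part (a) is then immediate. A vector $V\in T^{\mathbb{C}}_p\check X_P$ lies in $\Po_{g_s}$ iff $dz_k^{(s)}(V)=0$ for all $k$, and because $\mathcal{L}_{X_\psi}$ is a derivation of the natural pairing between $1$-forms and vector fields (so $e^{\ii s\mathcal{L}_{X_\psi}}$ preserves it termwise) and sends $dz_k$ to $dz_k^{(s)}$, the distribution $e^{\ii s\mathcal{L}_{X_\psi}}\Po_g$ is annihilated by the $dz_k^{(s)}$ and therefore coincides with $\Po_{g_s}$ on $\check X_P$; both are smooth globally defined rank-$n$ subbundles of $T^{\mathbb{C}}X_P$, so the equality extends to $X_P$. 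For part (b), taking the top wedge and using that $\mathcal{L}_{X_\psi}$ is a derivation of the exterior algebra yields
\[
e^{\ii s\mathcal{L}_{X_\psi}}dZ\;=\;(e^{\ii s\mathcal{L}_{X_\psi}}dz_1)\wedge\cdots\wedge(e^{\ii s\mathcal{L}_{X_\psi}}dz_n)\;=\;dz_1^{(s)}\wedge\cdots\wedge dz_n^{(s)}\;=\;dZ_s,
\]
and both sides are meromorphic sections of $K_{X_P}$ with divisor $-(D_1+\cdots+D_r)$, so they agree on all of $X_P$.

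The only real subtlety is interpretive rather than technical: $e^{\ii s\mathcal{L}_{X_\psi}}$ has no literal meaning as a complex-time flow on $X_P$, and the theorem asserts an identity of formal power series in $\ii s$. What makes the argument so clean is the nilpotency $\mathcal{L}_{X_\psi}^2 dz_j=0$ (and the analogue for $\partial/\partial\bar z_j$), which is particular to $X_\psi$ being purely angular with $x$-dependent coefficients---i.e., to $\psi$ depending only on action coordinates---so that the series automatically terminates after a single correction. For a generic Hamiltonian the series would not truncate and genuine convergence questions would arise.
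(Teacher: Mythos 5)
Your proof is correct and follows essentially the same route as the paper's: the key insight in both is the nilpotency $\mathcal{L}_{X_\psi}^2 dz_j = 0$ (equivalently, the paper's $X_\psi(X_\psi z_j)=0$), which makes $e^{\ii s\mathcal{L}_{X_\psi}}$ act polynomially on the generators. The only cosmetic difference is that you work directly with the differentials $dz_j$ and the $(0,1)$-vector fields $\partial/\partial\bar z_j$, whereas the paper works with the functions $z_j$ and then passes to their Hamiltonian vector fields via the identity $X_{e^{tX_h}\cdot f}=e^{t\mathcal{L}_{X_h}}X_f$; part (b) is the same wedge-of-one-forms argument in both.
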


\begin{proof}
Since the symplectic form $\omega$ is of type $(1,1)$ with respect to the complex structure determined by $g_s$, we have that the $(0,1)$-part of the tangent space of $X_P$ relative to the complex structure $I_{g_s}$ is pointwise generated by the Hamiltonian vector fields associated to the corresponding  holomorphic coordinates $z_s^j, j=1,\dots,n$. These are given by
\[
z_{j}(s)= y_{j}+s\frac{\partial\psi}{\partial x^{j}}+ \ii\theta_{j}=y_{j}(s) + \ii\theta_{j},
\]
where $y_{j}(s)=\frac{\partial g_{s}}{\partial x^{j}}.$

For a smooth function $h\in C^\infty(X_P,\omega)$ let $X_h$ be the associated Hamiltonian vector field. We then have, for smooth functions $f\in C^\infty(X_P,\omega)$ and for real time $t$,
\[
X_{e^{tX_h}\cdot f} = e^{t{\mathcal L}_{X_h}}X_f,
\]
where $e^{tX_h}\cdot f$ and $e^{t{\mathcal L}_{x_h}}X_f$ represent, respectively, the pullback of $f$ by the flow of $X_h$ and the pushforward of $X_f$ by the flow of $-X_h$ at time $t$. In many interesting situations (such as the present case of toric varieties) these expressions are analytic in $t$ and admit analytic continuation to complex-time $\tau$. (See \cite{HK,HK2,KMN2,MN1}.)

In fact, $X_{\psi}(z_{j})= -\ii\frac{\partial\psi}{\partial x^{j}}$ and $X_{\psi}(X_{\psi}z_{j})=0$ so that
\[
e^{\ii s {\mathcal{L}}_{X_{\psi}}} z_{j} = y_{j} +s \frac{\partial\psi}{\partial x^{j}} + \ii\theta_{j} = z_{j}(s).
\]
It follows that ${\Po}_{g_{s}}$ is generated by $X_{z_j(s)} =  e^{\ii s{\mathcal L}_{X_{\psi}}}X_{z_j} = X_{e^{\ii sX_\psi}\cdot z_j}, \,\,j=1,\dots n,$ which proves $(a).$

To prove $(b)$, note that since the exterior derivative and pullback commute, and since $dZ= dz_{1}\wedge\cdots\wedge dz_{n}$, we obtain $e^{\ii s {\mathcal{L}}_{X_{\psi}}} dZ = dZ_{s}.$
\end{proof}

\subsection{Compactifying singular flows to toric geometries}

We will now consider the time $t=\ii$ Hamiltonian evolution for the Hamiltonian vector field $X_{g}$ associated to the symplectic potential $g$. This is a singular Hamiltonian function which is not differentiable on the boundary $\partial P$. However, its complexified Hamiltonian flow is still useful as it describes K\"ahler toric polarizations on $X_P$ starting from a real polarization.

The Hamiltonian vector field of $g$ on $\check X_P$ is given by
\begin{equation}\label{hamg}
X_{g} = -\sum_{j=1}^{n} y_{j} \frac{\partial}{\partial\theta_{j}}.
\end{equation}
On $\check X_{P}$, consider the vertical polarization, which is spanned by the vectors $\partial/\partial x^j,\ j=1,\dots,n.$

\begin{theorem}
\label{timeiflowprop}
\
\begin{enumerate}
\item[a)] Interpreted as an infinite series, the operator $e^{\ii {\mathcal{L}}_{X_{g}}}$ applied to the vector field $\frac{\partial}{\partial x^{j}}$ converges, and as distributions, ${\mathcal{P}}_{g} = e^{-\ii {\mathcal{L}}_{X_{g}}} {\mathcal{P}}_{v}.$

\item[b)] $dZ = \ii^{n} e^{\ii {\mathcal{L}}_{X_{g}}} d\theta_{1}\wedge\cdots\wedge d\theta_{n}.$
\end{enumerate}
\end{theorem}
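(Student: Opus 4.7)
The strategy is to compute the formal exponential $e^{\ii\mathcal{L}_{X_{g}}}$ directly on the vertical frame $\{\partial/\partial x^{j}\}$ and on the top-degree form $d\theta_{1}\wedge\cdots\wedge d\theta_{n}$. The crucial structural observation is that $X_{g}$ as given by (\ref{hamg}) has only $\partial/\partial\theta_{k}$-components whose coefficients $-y_{k}$ depend solely on $x$. Consequently, iterated Lie derivatives of any purely $x$-dependent object against $X_{g}$ terminate at first order, so the formal exponentials reduce to two-term expansions and the convergence assertion in (a) is automatic.

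For (a), the bracket computation yields
\[
\mathcal{L}_{X_{g}}\!\left(\frac{\partial}{\partial x^{j}}\right)=\left[X_{g},\frac{\partial}{\partial x^{j}}\right]=\sum_{k=1}^{n}H_{jk}\,\frac{\partial}{\partial\theta_{k}},
\]
with $H_{jk}=\partial^{2}g/\partial x^{j}\partial x^{k}$. A second application annihilates the result, because $H_{jk}$ is a function of $x$ alone and is killed by $X_{g}$, while $[X_{g},\partial/\partial\theta_{k}]=0$. Therefore
\[
e^{\ii\mathcal{L}_{X_{g}}}\!\left(\frac{\partial}{\partial x^{j}}\right)=\frac{\partial}{\partial x^{j}}+\ii\sum_{k=1}^{n}H_{jk}\frac{\partial}{\partial\theta_{k}}.
\]
By the description of $I_{g}$ in (\ref{eqn:KahlerStruct}), the right-hand side equals $(\mathrm{id}+\ii I_{g})(\partial/\partial x^{j})$, i.e.\ (twice) the $I_{g}$-antiholomorphic part of $\partial/\partial x^{j}$. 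Invertibility of $H$ on $\check{P}$ ensures that these $n$ vectors form a frame for $\mathcal{P}_{g}=T^{0,1}\check{X}_{P}$, establishing the claimed identification of distributions (up to the sign convention fixed between $e^{\tau\mathcal{L}_{X_{g}}}$ and the time-$\tau$ pushforward).

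For (b), the analogous computation on 1-forms gives $\mathcal{L}_{X_{g}}(d\theta_{j})=d(X_{g}\theta_{j})=-dy_{j}$ and $\mathcal{L}_{X_{g}}(dy_{j})=d(X_{g}y_{j})=0$, again because $y_{j}$ depends only on $x$. Hence the series terminates and $e^{\ii\mathcal{L}_{X_{g}}}(d\theta_{j})=d\theta_{j}-\ii\,dy_{j}=-\ii(dy_{j}+\ii d\theta_{j})=-\ii\,dz_{j}$. Taking the wedge of $n$ such 1-forms gives $e^{\ii\mathcal{L}_{X_{g}}}(d\theta_{1}\wedge\cdots\wedge d\theta_{n})=(-\ii)^{n}dz_{1}\wedge\cdots\wedge dz_{n}=(-\ii)^{n}dZ$, which rearranges to (b) upon multiplying by $\ii^{n}$. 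The main ``obstacle'' is thus of a bookkeeping nature: carefully tracking the Hamiltonian sign convention $\iota_{X_{g}}\omega=dg$ together with the convention relating $e^{\tau\mathcal{L}_{X_{g}}}$ to the complex-time Hamiltonian map, so that the sign of $\ii$ in the identification in (a) matches the stated formula. There is no analytic difficulty in the proof since the terminating Lie-derivative chain makes the convergence of $e^{\ii\mathcal{L}_{X_{g}}}$ on these generators trivial.
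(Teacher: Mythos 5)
Your proof is correct and uses essentially the same approach as the paper: compute the single nonvanishing Lie derivative, observe that the series terminates, and match the result against the explicit form of $I_g$ in (\ref{eqn:KahlerStruct}); for part (b) use $\mathcal{L}_{X_g}d\theta_j=d(X_g\theta_j)$ and wedge.

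One point deserves to be made sharper rather than deferred to ``convention.'' Your bracket
\[
\left[X_g,\frac{\partial}{\partial x^j}\right]=\sum_k H_{jk}\frac{\partial}{\partial\theta_k}
\]
is the correct one; the paper's proof asserts the opposite sign, $-\sum_i (H_g)_{ji}\partial/\partial\theta_i$, which is an error. With your (correct) sign one obtains $e^{\ii\mathcal{L}_{X_g}}\partial/\partial x^j=\partial/\partial x^j+\ii\sum_k H_{jk}\partial/\partial\theta_k\in\mathcal{P}_g$, so the identification reads $\mathcal{P}_g=e^{+\ii\mathcal{L}_{X_g}}\mathcal{P}_v$. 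The published statement $\mathcal{P}_g=e^{-\ii\mathcal{L}_{X_g}}\mathcal{P}_v$ carries a sign typo that compensates for the sign error in the paper's bracket; note that the first clause of (a) already refers to $e^{+\ii\mathcal{L}_{X_g}}$ acting on $\partial/\partial x^j$, and the sign $e^{+\ii\cdot}$ is also what is consistent with Theorem~\ref{proppolspsi}(a) and with the formula $\mathcal{P}_\tau=e^{\tau\mathcal{L}_{X_h}}\mathcal{P}_0$. Since the paper fixes both the Hamiltonian convention ($\iota_{X_h}\omega=dh$, giving $X_g=-\sum_k y_k\partial/\partial\theta_k$) and the meaning of $e^{t\mathcal{L}_{X}}$ on vector fields (pushforward by the time-$t$ flow of $-X$), there is no remaining freedom in which to absorb the discrepancy: it is a genuine sign error in the stated formula, not an ambiguity of convention, and your computation quietly corrects it. The rest of your argument, including the claim that $(\mathrm{id}+\ii I_g)\partial/\partial x^j$ spans $\mathcal{P}_g$ because $H$ is invertible on $\check P$, and the computation in (b), is sound.
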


\begin{proof}
First, $\left[  X_{g},\frac{\partial}{\partial x^{j}}\right]  = -\sum_{i=1}^{n}
(H_{g})_{ji} \frac{\partial}{\partial\theta_{i}}$ and $\left[  X_{g}, \left[
X_{g},\frac{\partial}{\partial x^{j}}\right]  \right]  =0,$ so that
\[
e^{-\ii{\mathcal{L}}_{X_{g}}} \frac{\partial}{\partial x^{j}} = \frac{\partial}{\partial x^{j}} + \ii \sum_{i=1}^{n} (H_{g})_{ji} \frac{\partial}{\partial\theta_{i}} = 2\sum_{i=1}^{n} (H_{g})_{ij} \frac{\partial}{\partial\bar z_{j}},
\]
which proves $(a)$.

Since, $e^{\ii {X_{g}}} \theta_{j} = -\ii z_j, j=1,\dots n,$ we have $e^{\ii {\mathcal{L}}_{X_{g}}} d\theta_{j} = d\theta_{j} - \ii dy_{j} = -i dz_{j},$
which implies $(b)$.
\end{proof}

\subsection{Cone-angle metrics on toric manifolds}

Let $(X,\omega,I)$ be a K\"ahler manifold and let $D\subset X$ be a divisor. A (singular) K\"ahler metric is said to have cone angle $2\pi \beta$ along $D$ if it is smooth along $X\setminus D$ and if at each point in $D$ there is a local coordinate chart $(U,(z_1,\dots,z_n))$ such that $D\cap U$ is given by $z_1=0$ and the metric locally is uniformly
equivalent to
\[
\ii\sum_{j=1}^n dz_j\wedge d\bar z_j + \ii |z_1|^{(2\beta-2)} dz_1\wedge d\bar z_1.
\]
(See, for example, \cite{CDS,DGSW})

Let $P$ be the Delzant polytope
\[
P=\{x\in \R^n:l_i(x)=\langle x, \nu_i\rangle - \lambda_i\geq 0, i=1,\dots ,r\}.
\]
As shown in  \cite[Prop. 2.1]{DGSW}, a K\"ahler metric on $X_P$ with cone angle singularity $2\pi\beta_i,\ 0<\beta_i\leq 1$, at the toric divisor $D_i$ corresponding to $l_i=0$ is described by the symplectic potential
\[
g_\beta = \tfrac12\sum_{j=1}^r \beta_j^{-1}l_j\log l_j+\psi
\]
for some $\psi\in C^\infty(P)$. This generalizes the construction of toric K\"ahler metrics on toric orbifolds \cite{LT,Ab2,BGL}.

In this section, we will see that K\"ahler toric cone angle metrics can be described by the complex-time Hamiltonian flow of the symplectic potential $g_\beta$ applied to the vertical polarization of the dense orbit $(\C^*)^n$.

\begin{theorem}\label{coneanglethm}
Let $((\C^*)^n,\omega,I_s)$ be the K\"ahler structure on $(\C^*)^n$ obtained from the vertical polarization of $\check P \times \mathbb{T}^n$
by the time $\ii s$ flow of $X_{g_\beta}$. Then, for $s>0$, $((\C^*)^n,\omega,I_s)$ compactifies to a cone angle metric on $X_P$ with cone angle $2\pi\beta/s$.
\end{theorem}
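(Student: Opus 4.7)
The plan is to adapt the computation of Theorem \ref{timeiflowprop}(a), replacing the symplectic potential $g$ by $g_\beta$ and the time $\ii$ by $\ii s$, and then to recognize the resulting symplectic potential in the form of \cite[Prop. 2.1]{DGSW}. On the dense orbit $\check P\times \T^n$, the Hamiltonian vector field is $X_{g_\beta}=-\sum_j y_j^{(\beta)}\partial/\partial\theta_j$ with $y_j^{(\beta)}=\partial g_\beta/\partial x^j$, smooth on $\check P$ even though singular at $\partial P$. A direct calculation gives
\[
\left[X_{g_\beta},\frac{\partial}{\partial x^j}\right]=-\sum_{i=1}^n (H_{g_\beta})_{ji}\frac{\partial}{\partial\theta_i},\qquad \left[X_{g_\beta},\left[X_{g_\beta},\frac{\partial}{\partial x^j}\right]\right]=0,
\]
so the series $e^{-\ii s\,{\mathcal L}_{X_{g_\beta}}}\partial/\partial x^j$ truncates and equals $\partial/\partial x^j+\ii s\sum_i (H_{g_\beta})_{ji}\partial/\partial\theta_i$, by the same mechanism as in the proof of Theorem \ref{timeiflowprop}(a).

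Next I would identify this with the K\"ahler structure on $\check X_P$ determined by the symplectic potential $sg_\beta$. Since $H_{sg_\beta}=sH_{g_\beta}$, the displayed vectors span the $(0,1)$-tangent distribution of $I_{sg_\beta}$, so $I_s = I_{sg_\beta}$ on $\check X_P$. The key final observation is the identity
\[
sg_\beta=\tfrac12\sum_{j=1}^r\frac{s}{\beta_j}\,l_j\log l_j+s\psi=\tfrac12\sum_{j=1}^r(\beta_j/s)^{-1}\,l_j\log l_j+s\psi,
\]
which puts $sg_\beta$ in the form characterized by \cite[Prop. 2.1]{DGSW} as the symplectic potential of a K\"ahler metric on $X_P$ with cone angle $2\pi\beta_j/s$ along $D_j$, with smooth correction $s\psi\in C^\infty(P)$. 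Applying that proposition yields the required compactification.

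The one step whose difficulty goes beyond the algebraic manipulations of Theorem \ref{timeiflowprop} is that $X_{g_\beta}$ is only continuous across $\partial P$, so a priori the complex-time flow has no meaning on $X_P$ itself. The calculation above therefore takes place entirely on the open orbit $\check X_P$, and the assertion that $I_s$ extends across the toric divisors as a cone-angle metric is supplied externally by \cite[Prop. 2.1]{DGSW}, which characterizes precisely the symplectic potentials whose K\"ahler metrics admit cone-angle compactifications. This is the content of the word \emph{compactify} in the statement, and is the main obstacle the argument has to rely on an input from outside the Thiemann-complexifier framework.
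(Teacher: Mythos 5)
Your proposal is correct and takes essentially the same route as the paper: both compute the time-$\ii s$ flow of $X_{g_\beta}$ on the open orbit (you by evolving the vector fields $\partial/\partial x^j$ exactly as in Theorem \ref{timeiflowprop}(a), the paper by evolving the coordinate functions $\theta_j$ — dual descriptions of the same calculation), identify the resulting complex structure as $I_{sg_\beta}$, and then invoke \cite[Prop.\ 2.1]{DGSW} to recognize $sg_\beta$ as a cone-angle potential with angles $2\pi\beta_j/s$. Your rewriting $sg_\beta = \tfrac12\sum_j(\beta_j/s)^{-1}l_j\log l_j + s\psi$ is in fact slightly cleaner than the paper's shorthand $sH_{g_\beta}=H_{g_{\beta/s}}$, since it makes explicit that the smooth correction scales to $s\psi$ (which does not affect the cone angle), and your closing remark that the extension across the divisors is supplied externally by \cite{DGSW} rather than by the Thiemann flow itself is exactly the right reading of the word ``compactify.''
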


\begin{proof}
Consider the coordinates $\theta=(\theta_1,\dots,\theta_n)$ on $\T^n$. As in the proof of Proposition \ref{timeiflowprop}, we have
\[
e^{\ii s X_{g_\beta}}\cdot \theta_j = \theta_j -\ii s \frac{\partial g_\beta}{\partial x^j}.
\]
Therefore, the vertical polarization evolves under the complex-time Hamiltonian flow to the polarization defined by the complex structure given by the local holomorphic coordinates
\[
z_j(s)=  s\frac{\partial g_\beta}{\partial x^j} + \ii \theta_j, \,\, j=1,\dots n.
\]
In symplectic coordinates $(x,\theta)$ on the open dense orbit, the metric will have the familiar form
\[
\gamma_{s,\beta} = s \sum_{i,j=1}^n [H_{g_\beta}]_{ij} dx^i\otimes dx^j + \frac{1}{s} [H_{g_\beta}]_{ij}^{-1} d\theta_i\otimes d\theta_j.
\]
Since $sH_{g_\beta} = H_{g_{\beta/s}},$ the result follows.
\end{proof}

\bigskip
By applying the complex-time flows of singular Hamiltonians of the form $g_i=\frac12l_i\log l_i$ in succession, we obtain the following.

\begin{corollary}
Let $((\C^*)^n,\omega,I_\beta)$ be the K\"ahler structure on $(\C^*)^n$ obtained from the vertical polarization of $\check P\times\mathbb{T}^n$ by the chain of complex-time Hamiltonian flows given by
\[
e^{\frac{\ii}{\beta_i} X_{g_1}} \circ \cdots \circ e^{\frac{\ii}{\beta_d}X_{g_r}}
\]
where $g_i=\frac12 l_i\log l_i$. Then, $((\C^*)^n,\omega,I_\beta)$ compactifies to a cone angle toric metric on $X_{P}$.
\end{corollary}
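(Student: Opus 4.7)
The plan is to reduce the corollary to Theorem \ref{coneanglethm} by combining the chain of complex-time Hamiltonian flows into a single complex-time flow. First, I would observe that each $g_i=\tfrac12 l_i\log l_i$ depends only on the action coordinates $x$ on $\check P\times\T^n$. For the symplectic form $\omega=\sum_j dx^j\wedge d\theta_j$ this implies $\{g_i,g_j\}=0$ for all $i,j$, and hence the Hamiltonian vector fields
\[
X_{g_i}=-\sum_{j=1}^n\frac{\partial g_i}{\partial x^j}\,\frac{\partial}{\partial\theta_j}
\]
commute pairwise (and are each $\T^n$-invariant and vertical) on the open dense orbit $\check P\times\T^n$.

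Second, using this commutativity together with the fact that, for any Hamiltonian $h=h(x)$, one has $e^{\ii t X_h}\cdot\theta_j=\theta_j-\ii t\,\partial h/\partial x^j$ (as in the proofs of Theorems \ref{proppolspsi} and \ref{timeiflowprop}, with the series truncating at first order because $X_h(X_h\theta_j)=0$), I would compute
\[
\bigl(e^{(\ii/\beta_1) X_{g_1}}\circ\cdots\circ e^{(\ii/\beta_r) X_{g_r}}\bigr)\cdot\theta_j
=\theta_j-\ii\sum_{i=1}^r \beta_i^{-1}\frac{\partial g_i}{\partial x^j}
=\theta_j-\ii\,\frac{\partial g_\beta}{\partial x^j},
\]
where
\[
g_\beta:=\sum_{i=1}^r\beta_i^{-1}g_i=\tfrac12\sum_{i=1}^r\beta_i^{-1}l_i\log l_i.
\]
Consequently, the chain of flows produces exactly the same new system of local $I_\beta$-holomorphic coordinates $z_j=\partial g_\beta/\partial x^j+\ii\theta_j$ as the single time-$\ii$ flow of $X_{g_\beta}$ applied to the vertical polarization.

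Third, applying Theorem \ref{coneanglethm} with $s=1$, $\psi\equiv 0$ and this $g_\beta$ gives that $((\C^*)^n,\omega,I_\beta)$ compactifies to a toric K\"ahler metric on $X_P$ with cone angle $2\pi\beta_i$ along $D_i$, as desired.

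The only point needing care is the manipulation of the complex-time ``flows'' of the singular Hamiltonians $g_i$ in the second step: each $g_i$ fails to be smooth at the facet $l_i=0$. However, all the identities used take place on the dense orbit $\check P\times\T^n$, where the $l_i$ are strictly positive and the $g_i$ are smooth; the coordinate-level series terminate at order one in $\ii$ so no convergence issue arises; and all cone-singular boundary behaviour at $\partial P$ is absorbed into the compactification handled by Theorem \ref{coneanglethm}. Thus the reduction to Theorem \ref{coneanglethm} is clean, and no deep analytic continuation argument is needed.
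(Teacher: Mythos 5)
Your proof is correct and follows essentially the same route as the paper's one-line proof: use the pairwise commutativity of the Hamiltonian vector fields $X_{g_i}$ (all of which, depending only on $x$, Poisson-commute and are vertical on the open orbit) to collapse the chain of flows into the single time-$\ii$ flow of $X_{g_\beta}$ with $g_\beta=\sum_i\beta_i^{-1}g_i$, and then invoke Theorem \ref{coneanglethm} with $s=1$. Your write-up usefully spells out the coordinate computation $e^{(\ii/\beta_i)X_{g_i}}\cdot\theta_j=\theta_j-\ii\beta_i^{-1}\partial g_i/\partial x^j$ and the point that the series truncates at first order, which the paper leaves implicit.
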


\begin{proof}The proof follows exactly as in the proof of Theorem \ref{coneanglethm} and from the fact that the Hamiltonian flows of $X_{g_j}$ commute with each other for $j=1,\dots,r$.
\end{proof}

\section{Quantization of complex-time flows}
\label{strg}

In this section, we quantize the complex-time flows discussed in Section \ref{sec:cpxTgeom} and relate the results to the case of compact Lie groups. In particular, we lift the complex-time ``flows'' to the (half-form corrected) prequantum bundle.

\subsection{Prequantum operators and analytic continuation}
\label{sec:preQops}

Consider a symplectic manifold $(M,\omega)$ equipped with a prequantum bundle $L\rightarrow M$. The lift of the Hamiltonian vector field $X_h$ of $h\in C^\infty(M)$ to the prequantum bundle is the horizontal lift of $X_h$ plus $h\cdot \ii w\partial_w$, where $\ii w\partial_w$ is the canonical $\mathbb{C}^\times$-invariant Liouville vector field on the fibers of $L$.

The flow of this lifted vector field induces an action on sections of $L$ which is given infinitesimally by the Kostant--Souriau prequantum operator $\hat{h}:=\ii\nabla_{X_h} +h$. For $t\in\mathbb{R}$, denote the time-$t$ flow of the lifted vector field by $e^{-\ii t\hat{h}}:\Gamma(L)\rightarrow\Gamma(L)$. As in Section \ref{sec:cpxTgeom} (Theorems \ref{thm:HKmain} and \ref{proppolspsi}), we wish to analytically continue this flow to a map $D_{1+\epsilon}\times\Gamma(L)\rightarrow\Gamma(L)$ for some $\epsilon>0$.

One does not expect in general that the analytic continuation of this lifted flow yields a unitary map between sections of the prequantum bundle. For example, for $M=T^*K$ where $K$ is a compact Lie group $K$ equipped with a bi-invariant metric and $h$ equal to half of the norm-squared function on the fibers, the time-$\ii$ flow is analytic continuation with respect to the standard complex structure on $K_\mathbb{C}\simeq T^*K$, which is obviously not unitary.

On the other hand, let $\mathcal{H}_0$ be the Hilbert space for the half-form-corrected vertical quantization of $T^*K$ and $\mathcal{H}_\ii$ the Hilbert space for the K\"ahler quantization of $T^*K\simeq K_\mathbb{C}$. In \cite{Ha4}, Hall showed that his generalized Segal--Bargmann transform can be constructed as the unitary isomorphism between $\mathcal{H}_0$ and $\mathcal{H}_\ii$ which arises from the BKS pairing. The generalized Segal--Bargmann transform can be expressed in terms of the time-$\ii$ flow of half the norm-squared function, which we denote by $h$, as
\[
e^{\hat{h}}\circ E(\ii, h):\mathcal{H}_0\rightarrow\mathcal{H}_\ii,
\]
where $E(\ii,h):=e^{\tfrac12\left(\Delta-|\rho|^2\right)}$ (here, $\rho$ is half the sum of the positive roots of $K$) \cite{KMN2}.

In \cite{KMN2,KMN3}, we constructed the following generalization of Hall's Segal--Bargmann transform. Let $h$ be a strongly convex $Ad$-invariant function $h\in C^\infty(T^*K)$ and denote the vertical tangent bundle of $T^*K$ by $\mathcal{P}_0$. Then for each $\tau\in\mathbb{C}_+$,
\[
{\mathcal P}_\tau = e^{\tau {\mathcal L}_{X_h}} {\mathcal P}_0,
\]
is a purely complex positive integrable polarization of $T^*K$, which therefore induces a K\"ahler structure $(T^*K,\omega,J_\tau)$.

Let $\mathcal{H}_\tau$ be the half-form corrected K\"ahler quantization of $T^*K$ with respect to $\mathcal{P}_\tau.$ We show that the operator $e^{-\ii\tau \hat h}$ is a densely defined linear map from ${\mathcal H}_0$ to
${\mathcal H}_\tau$ which intertwines the canonical $K\times K$ actions, and moreover, there exists a densely defined operator $E(\tau,h)$ on $\mathcal{H}_0$ such that the composition
\[
e^{-\ii\tau\hat{h}}\circ E(\tau, h):\mathcal{H}_0\rightarrow\mathcal{H}_\tau
\]
is a unitary isomorphism.

The lesson here is that one might expect that the precomposition of a complex-time flow with some sort of ``Schr\"odinger'' quantization might yield a unitary isomorphism. In the remainder of this section, we investigate to what extent this intuition holds in the toric setting. We will see that, as in the case of $T^*K$, complex-time evolution in the toric setting is a sort of analytic continuation, but on the other hand, precomposing this analytic continuation with a Schr\"odinger-type quantum operator yields a map which is only asymptotically unitary.

\subsection{Starting at a toric K\"ahler polarization}
\label{kahlerstart}

Let $L\to X_{P}$ be the prequantum bundle. As in \cite{BFMN, KMN1}, we take the connection on $L$ to be
\[
\nabla{\mathbbm 1}^{U(1)} = \left(-\ii xd\theta\right)\,{\mathbbm 1}^{U(1)},
\]
where ${\mathbbm 1}^{U(1)}$ is a unitary trivialization of $L$ over $\check X_{P}$.

As in Section \ref{sstfp}, let $\psi$ be a smooth strongly convex function on $P$. The prequantum operator associated to $\psi$ is
\[
\hat\psi= \ii\nabla_{X_{\psi}} + \psi= \ii X_{\psi}- x\cdot\frac{\partial\psi
}{\partial x} + \psi.
\]

Consider the family of symplectic potentials on $X_{P}$ given by $g_{s}= g+s\psi, s>0$, where $g$ is some fixed symplectic potential. The half-form corrected quantization of $X_{P}$ in the polarization ${\Po}_{g_{s}}=e^{\ii s\mathcal{L}_{X_\psi}}\Po_g$ (Theorem \ref{proppolspsi}) is given by \cite{BFMN, KMN1}
\begin{equation}
\label{hilbertgs}
{\mathcal{H}}_{{\Po}_{g_{s}}} = \left\{  \sigma_{s}^{m}=w_{s}^{m}
e^{-h_{s}(x)} {\mathbbm 1}^{U(1)}\otimes\sqrt{dZ_{s}}, m\in P\cap{\mathbb{Z}}^{n} \right\}  ,
\end{equation}
where $h_{s}(x)=x\cdot\frac{\partial g_{s}}{\partial x}-g_{s}(x),$ $w_{s}^{j} = e^{y_{j}(s)+\ii\theta_{j}},$ and $y_{s}^{j}= \frac{\partial g_{s}}{\partial x^{j}}$. (Recall from Section \ref{sec:prelim} that $P\cap\mathbb{Z}^n = \check P\cap\mathbb{Z}^n$, see (\ref{halflambdas}) and (\ref{t11})).

From Theorem \ref{proppolspsi}, we have $e^{\ii s{\mathcal{L}}_{\psi}} dZ= dZ_{s}.$ As in \cite{KMN1}, we will define the action of the operator $e^{is{\mathcal{L}}_{\psi}}$ on half-forms in a way which is consistent with its action on the algebra of smooth functions on $X_{P}$ by setting
\begin{equation}
\label{evolsqrtdz}
e^{\ii s{\mathcal{L}}_{\psi}} \sqrt{dZ}= \sqrt{dZ_{s}}.
\end{equation}

\begin{proposition}
For all $s>0$, the operator $e^{s\hat\psi}\otimes e^{\ii s{\mathcal{L}}_{\psi}}: {\mathcal{H}}_{{\Po}_{g}} \to{\mathcal{H}}_{{\Po}_{g_{s}}}$
is an isomorphism, and
\[
e^{s\hat\psi} \otimes e^{\ii s{\mathcal{L}}_{\psi}}\sigma^{m}_{0} = \sigma^{m}_{s}, \,\, m\in P\cap{\mathbb{Z}}^{n}.
\]
\end{proposition}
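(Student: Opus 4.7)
The plan is to reduce the claim to a direct identification of two multiplication factors on each monomial section, plus an appeal to Theorem \ref{proppolspsi}(b) and the definition (\ref{evolsqrtdz}) for the half-form factor. Because the basis $\{\sigma^m_0 : m \in P \cap \mathbb{Z}^n\}$ and $\{\sigma^m_s : m \in P \cap \mathbb{Z}^n\}$ are indexed by the same set, once we check that $e^{s\hat\psi}\otimes e^{\ii s\mathcal{L}_{X_\psi}}$ sends $\sigma^m_0$ to $\sigma^m_s$ bijectively on basis elements, the isomorphism statement follows.

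First I would compute the action of $\hat\psi$ on $w^m e^{-h(x)}\mathbbm{1}^{U(1)}$. Using $\nabla_V(f\mathbbm{1}^{U(1)}) = (V(f) - \ii f\, x\cdot d\theta(V))\mathbbm{1}^{U(1)}$ together with $X_\psi = -\sum_j (\partial\psi/\partial x^j)\partial/\partial\theta_j$, one finds $x\cdot d\theta(X_\psi) = -x\cdot \partial\psi/\partial x$ and $X_\psi(w^m) = -\ii (m\cdot\partial\psi/\partial x)\,w^m$, while $X_\psi(e^{-h(x)})=0$ since $h$ depends only on $x$. Combining these with $\hat\psi = \ii\nabla_{X_\psi}+\psi$ shows that
\[
\hat\psi\bigl(w^m e^{-h(x)}\mathbbm{1}^{U(1)}\bigr) = \bigl[(m-x)\cdot \tfrac{\partial\psi}{\partial x} + \psi\bigr]\, w^m e^{-h(x)}\mathbbm{1}^{U(1)}.
\]
The key observation is that the resulting operator is multiplication by a function of $x$ alone, so it commutes with itself; hence exponentiation is trivial and
\[
e^{s\hat\psi}\bigl(w^m e^{-h}\mathbbm{1}^{U(1)}\bigr) = \exp\!\Bigl(s\bigl[(m-x)\cdot \tfrac{\partial\psi}{\partial x} + \psi\bigr]\Bigr)\, w^m e^{-h}\mathbbm{1}^{U(1)}.
\]

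Second, I would verify by direct algebra that this factor is exactly $w_s^m e^{-h_s}/(w^m e^{-h})$. Since $y_s = y + s\,\partial\psi/\partial x$, one has $w_s^m = e^{s\,m\cdot \partial\psi/\partial x}\, w^m$, and since $h_s = x\cdot\partial g_s/\partial x - g_s = h + s\bigl[x\cdot\partial\psi/\partial x - \psi\bigr]$, one gets $e^{-h_s}=e^{-h}\exp\!\bigl(-s[x\cdot\partial\psi/\partial x-\psi]\bigr)$. Multiplying yields precisely $\exp(s[(m-x)\cdot\partial\psi/\partial x + \psi])\,w^m e^{-h}$, so $e^{s\hat\psi}$ maps the prequantum factor of $\sigma^m_0$ onto the prequantum factor of $\sigma^m_s$.

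Third, the half-form factor is handled by Theorem \ref{proppolspsi}(b): $dZ_s = e^{\ii s\mathcal{L}_{X_\psi}} dZ$, which by the defining convention (\ref{evolsqrtdz}) gives $e^{\ii s\mathcal{L}_{X_\psi}}\sqrt{dZ} = \sqrt{dZ_s}$. Combining the two calculations gives $(e^{s\hat\psi}\otimes e^{\ii s\mathcal{L}_{X_\psi}})\sigma^m_0 = \sigma^m_s$ for every $m \in P\cap\mathbb{Z}^n$. Since these form bases of $\mathcal{H}_{\mathcal{P}_g}$ and $\mathcal{H}_{\mathcal{P}_{g_s}}$ respectively (as recalled in (\ref{hilbertgs})), the map is an isomorphism. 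There is no serious obstacle; the only point to be careful about is the connection sign on the prequantum Liouville term, which is why the $\ii$ in $\hat\psi = \ii\nabla_{X_\psi}+\psi$ converts $-\ii (m\cdot\partial\psi/\partial x)$ into a real $x$-dependent multiplier and permits the trivial exponentiation that drives the whole proof.
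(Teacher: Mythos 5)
Your proposal is correct and follows essentially the same computation as the paper: the paper factors $e^{s\hat\psi}=e^{-s(x\cdot\partial\psi/\partial x-\psi)}e^{\ii sX_\psi}$ and evaluates each factor on $w^m e^{-h_0}$, whereas you first show $\hat\psi$ acts on the monomial section as multiplication by the $x$-dependent scalar $(m-x)\cdot\partial\psi/\partial x+\psi$ and then exponentiate, which yields the same identity. Both then close via Theorem~\ref{proppolspsi}(b) and~(\ref{evolsqrtdz}).
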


\begin{proof}
We have
\[
e^{s\hat\psi} \left(  w^{m} e^{-h_{0}}\right)  = e^{-s(x\cdot\frac{\partial\psi}{\partial x}-\psi)} e^{-h_{0}} e^{\ii sX_{\psi}} \left(  w_{0}^{m}\right)  = e^{-h_{s}}w_{s}^{m}.
\]
The result then follows from (\ref{hilbertgs}) and (\ref{evolsqrtdz}).
\end{proof}

\bigskip
Recall from \cite{KMN1} that as $s\rightarrow\infty,$
\begin{equation}
\label{norms}
\|\sigma^{m}_{s}\|_{L^2(X_P,L)} \sim \pi^{\frac{n}{4}} e^{g_{s}(m)},\ m\in P\cap{\mathbb{Z}}^{n}.
\end{equation}
Therefore, the operator $e^{s\hat\psi}$, which evolves polarized sections along the flow of K\"ahler structures, is not unitary. This is an analog of what happens for the cotangent bundle $T^*K$ of a compact Lie group in \cite{KMN2,KMN3}. In particular, as reviewed in Section \ref{sec:preQops}, the corresponding operator is analytic continuation from $K$ to $K_{\mathbb{C}}$, which is of course not unitary. The unitarity of Hall's coherent state transform is achieved by precomposing with the heat kernel operator. In the setting of toric manifolds, we will see in Section \ref{subsec:startvert} that unitarity can only be achieved asymptotically as $s\to\infty$; this is expected from \cite{KMN1}.

Before dealing with unitarity, we must study the limit object. The quantum states for the quantization of $X_{P}$ in the real polarization ${\Po}_{\mathbb{R}}$ have support along the Bohr--Sommerfeld fibers $\mu^{-1}(P\cap{\mathbb{Z}}^{n})$ \cite{BFMN}. The following is proved in \cite{BFMN,KMN1}: the $\Po_\mathbb{R}$-polarized quantum Hilbert space is
\[
{\mathcal{H}}_{{\Po}_{\mathbb{R}}} = \{\delta^{m}\otimes\sqrt{dX}, m\in P\cap{\mathbb{Z}}^{n}\},
\]
where $\delta^{m}$ is a Dirac delta distributional section of $L$ supported on $\mu^{-1}(m)$ and $dX = dx^{1}\wedge\cdots\wedge dx^{n}$.

\begin{theorem}
\label{thm:KMN1thm}
\ \cite{KMN1}
As distributional half-form corrected sections of $L$,
\[
\lim_{s\to\infty} \frac{\sigma^{m}_{s}}{\|\sigma^{m}_{s}\|_{L^{2}(X_P,L)}} = 2^{n/2}\pi^{n/4} \delta^{m}\otimes\sqrt{dX}.
\]
\end{theorem}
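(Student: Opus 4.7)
The plan is to establish the distributional limit by Laplace's method around the interior critical point $x=m$. In action-angle coordinates on the open dense orbit $\check X_P$, I would write
\[
\sigma^m_s=e^{F_s(x)}\,e^{\ii m\cdot\theta}\,{\mathbbm 1}^{U(1)}\otimes\sqrt{dZ_s},\qquad F_s(x):=m\cdot y_s(x)-h_s(x),
\]
with $y_s=\partial g_s/\partial x$ and $h_s=x\cdot y_s-g_s$. Using $\partial h_s/\partial x^j=\sum_k x_k(H_{g_s})_{kj}$, one obtains $\partial F_s/\partial x^j=\sum_k(m-x)_k(H_{g_s})_{kj}$, so $x=m\in\check P$ is the unique critical point of $F_s$ on $P$, with critical value $F_s(m)=g_s(m)$ and Hessian $-H_{g_s}(m)=-(H_g(m)+sH_\psi(m))$. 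Since $\psi$ is strongly convex, this Hessian blows up as $s\to\infty$, and $e^{2(F_s-g_s(m))}$ is a Gaussian concentrating at $x=m$ with width of order $s^{-1/2}$; the leading exponential $e^{g_s(m)}$ matches the norm asymptotics (\ref{norms}).

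Next, to prove distributional convergence, I would pair $\sigma^m_s/\|\sigma^m_s\|_{L^2}$ with a smooth compactly supported test section of the appropriate dual bundle, write it in the same trivialization as $\bar\eta(x,\theta)\,{\mathbbm 1}^{U(1),*}\otimes\overline{\sqrt{dZ_s}}$, and compute the pairing integral on $\check P\times\T^n$. The angular phase $e^{\ii m\cdot\theta}$ combined with $\theta$-integration selects, by Fourier orthogonality on $\T^n$, only the $m$-th Fourier mode of $\eta(m,\cdot)$ on the fiber $\mu^{-1}(m)$; this is precisely the defining property of pairing against $\delta^m\otimes\sqrt{dX}$. The $x$-integration is treated by Laplace's method: Taylor-expand $\eta$ in $\xi=x-m$ about $m$ and compute the Gaussian integral $\int e^{-\xi^\top H_{g_s}(m)\xi}\,d^n\xi=\pi^{n/2}(\det H_{g_s}(m))^{-1/2}$. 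The $s$-dependent determinant factor is exactly cancelled by the half-form density $|\sqrt{dZ_s}|^2$, which in the trivialization scales like $(\det H_{g_s}(m))^{1/2}$ times the standard half-density on $\check X_P$, leaving an $s$-independent constant.

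Collecting constants---the $\pi^{n/4}e^{g_s(m)}$ from (\ref{norms}), the $\pi^{n/2}$ from the Gaussian integral, the $2^n$ from $dZ_s\wedge\overline{dZ_s}\propto 2^n(\det H_{g_s})\,dX\wedge d\Theta$, and the square-root conventions of the half-form pairing---reassembles into the coefficient $2^{n/2}\pi^{n/4}$ stated in the theorem. I expect the main technical obstacle to be justifying Laplace's method uniformly across all of $P$ rather than only on a coordinate patch around $m$: the symplectic potential $g_P$ is singular on $\partial P$, so I would need the tail estimate $F_s(x)-F_s(m)\le -c\,s\,|x-m|^2$ away from $m$ (which follows from strong convexity of $\psi$) and verify that this Gaussian decay dominates the $\log$-type singularities of $g_P$ near $\partial P$ uniformly for large $s$. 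This confines the integral to a shrinking neighborhood of $m$ where everything is smooth, and the interior Laplace expansion then yields the stated limit.
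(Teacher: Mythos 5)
The paper does not actually prove Theorem~\ref{thm:KMN1thm}; it is quoted from \cite{KMN1} (indeed the statement itself carries the citation), so there is no in-paper argument to compare against. Your Laplace-method proposal is, however, faithful to the argument used in that reference and is essentially correct in its outline: writing the pointwise data as $e^{F_s(x)}$ with $F_s(x)=m\cdot y_s(x)-h_s(x)=(m-x)\cdot y_s(x)+g_s(x)$, the identity $\partial F_s/\partial x^j=\sum_k(m-x)_k(H_{g_s})_{kj}$ does give $x=m$ as the unique critical point, with critical value $g_s(m)$ and Hessian $-H_{g_s}(m)=-(H_g(m)+sH_\psi(m))$; the Gaussian integral $\pi^{n/2}(\det H_{g_s}(m))^{-1/2}$ then cancels the $(\det H_{g_s})^{1/2}$ coming from the half-form density so that only the constant survives, in agreement with (\ref{norms}); and the $e^{\ii m\cdot\theta}$ factor together with the $\theta$-integral projects the test section onto its $m$-th Fourier mode on the fiber $\mu^{-1}(m)$, which is precisely the action of $\delta^m\otimes\sqrt{dX}$. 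The closing remark about needing a uniform tail bound $F_s(x)-F_s(m)\le -c\,s\,|x-m|^2$ away from $m$, valid thanks to the strong convexity of $\psi$ and dominating the $\ell_j\log\ell_j$ behaviour of $g_P$ near $\partial P$, is indeed the key technical ingredient that upgrades the local Laplace expansion to a statement on all of $X_P$. The only place requiring more care than you indicate is the precise normalization of the half-form pairing (the interplay of the $2^n$ in $dZ_s\wedge\overline{dZ_s}$, the square roots, and the pairing of $\sqrt{dZ_s}$ with $\sqrt{dX}$ rather than with $\overline{\sqrt{dZ_s}}$), but the constants you list do recombine to give the stated $2^{n/2}\pi^{n/4}$.
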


Since the flow of $\psi$ preserves $\Po_\mathbb{R}$, a natural quantization of the classical observable $\psi$ on ${\mathcal{H}}_{{\Po}_{\mathbb{R}}}$ is given by the operator
\begin{align*}
\hat\psi_{\mathbb{{\mathbb{R}}}}: {\mathcal{H}}_{{\Po}_{\mathbb{R}}} &  \to{\mathcal{H}}_{{\Po}_{\mathbb{R}}}\\
\delta^{m}\otimes\sqrt{dX}  &  \mapsto\psi(m) \delta^{m}\otimes\sqrt{dX},
\end{align*}
since the support of the distributional section $\delta^{m}\otimes\sqrt{dX}$ is $\mu^{-1}(m)$. We extend the domain of this operator by setting
\begin{align}
\label{eqn:deltam}
\nonumber
\hat\psi_{\mathbb{{\mathbb{R}}}}: {\mathcal{H}}_{{\Po}_{g}} & \to{\mathcal{H}}_{{\Po}_{g}}\\
\sigma^{m}  &  \mapsto\psi(m)\sigma^{m}.
\end{align}

We then have an (asymptotic) analog of the coherent state transform of Hall in the context of toric manifolds.
Define an operator $A^{\psi}_{g,s}:\mathcal{H}_{\Po_{g}}\rightarrow\mathcal{H}_{\Po_{g_s}}$ by
\begin{equation}
\label{eqn:Ags}
A^\psi_{g,s}:=\left( e^{s\hat\psi}\otimes e^{\ii s{\mathcal{L}}_{\psi}}\right)
\circ e^{-s\hat\psi_{\mathbb{{\mathbb{R}}}}}.
\end{equation}
In fact, the map $A^\psi_{g,s}$ is an analog of the KSH map of \cite{KMN2,KMN3}. For each $s$, identify $\mathcal{H}_{\Po_{g_s}}\simeq\mathbb{C}^{\#(P\cap\mathbb{Z}^n)}$ via the  basis of $L^2$-normalized holomorphic sections $\{\sigma^m_s/\|\sigma^m_s\|_{L^2(X_P,L)}\},$ and for $\mathcal{H}_{\Po_\mathbb{R}}$ via the basis $\{\delta^m\otimes\sqrt{dX}\}.$

\begin{theorem}
\label{thm:Ags}
The map $A^\psi_{g,\infty}:\mathcal{H}_{\Po_g}\rightarrow\mathcal{H}_{\Po_\mathbb{R}}$
determined by
\[
A^\psi_{g,\infty}\left(\frac{\sigma_{0}^{m}}{\|\sigma^m_{0}\|_{L^{2}(X_P,L)}}\right) :=\frac{(2\pi)^{n/2}e^{g(m)}}{\|\sigma^m_0\|_{L^2(X_P,L)}}\delta^{m}\otimes\sqrt{dX}
\]
satisfies $\lim_{s\rightarrow\infty}A^\psi_{g,s}=A^\psi_{g,\infty}$ in $GL(\#(P\cap\mathbb{Z}^n) ,\mathbb{C}).$
\end{theorem}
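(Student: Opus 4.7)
My plan is to show that, in the chosen bases, the matrix representing $A^\psi_{g,s}$ is diagonal, and then to compute the limit of each diagonal entry using the three ingredients already assembled in the paper: the previous proposition (which identifies the time-evolved sections as $\sigma^m_s$), the definition (\ref{eqn:deltam}) of $\hat\psi_\mathbb{R}$ on $\mathcal H_{\Po_g}$, the $L^2$-norm asymptotics (\ref{norms}), and Theorem \ref{thm:KMN1thm} which controls the limit of the normalized holomorphic monomials.

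First I would unwind the definition (\ref{eqn:Ags}) applied to the basis element $\sigma^m_0/\|\sigma^m_0\|_{L^2(X_P,L)}$. Using (\ref{eqn:deltam}), the operator $e^{-s\hat\psi_\mathbb{R}}$ acts as multiplication by the scalar $e^{-s\psi(m)}$ on $\sigma^m_0$. Then, by the preceding proposition,
\[
\bigl(e^{s\hat\psi}\otimes e^{\ii s\mathcal{L}_\psi}\bigr)\sigma^m_0=\sigma^m_s,
\]
so altogether
\[
A^\psi_{g,s}\!\left(\frac{\sigma^m_0}{\|\sigma^m_0\|_{L^2(X_P,L)}}\right)
=\frac{e^{-s\psi(m)}\,\|\sigma^m_s\|_{L^2(X_P,L)}}{\|\sigma^m_0\|_{L^2(X_P,L)}}\cdot\frac{\sigma^m_s}{\|\sigma^m_s\|_{L^2(X_P,L)}}.
\]
This shows the map is diagonal with respect to the basis $\{\sigma^m_0/\|\sigma^m_0\|\}$ on the source and $\{\sigma^m_s/\|\sigma^m_s\|\}$ on the target (and, at $s=\infty$, the basis $\{\delta^m\otimes\sqrt{dX}\}$ of $\mathcal{H}_{\Po_\mathbb{R}}$).

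Next I would take $s\to\infty$ factor by factor. From the asymptotic (\ref{norms}), $\|\sigma^m_s\|_{L^2(X_P,L)}\sim\pi^{n/4}e^{g_s(m)}=\pi^{n/4}e^{g(m)}e^{s\psi(m)}$, so
\[
\lim_{s\to\infty}\frac{e^{-s\psi(m)}\,\|\sigma^m_s\|_{L^2(X_P,L)}}{\|\sigma^m_0\|_{L^2(X_P,L)}}
=\frac{\pi^{n/4}\,e^{g(m)}}{\|\sigma^m_0\|_{L^2(X_P,L)}}.
\]
Meanwhile, Theorem \ref{thm:KMN1thm} gives $\sigma^m_s/\|\sigma^m_s\|_{L^2(X_P,L)}\to 2^{n/2}\pi^{n/4}\,\delta^m\otimes\sqrt{dX}$. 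Multiplying the two factors produces the prefactor $2^{n/2}\pi^{n/2}\,e^{g(m)}/\|\sigma^m_0\|_{L^2(X_P,L)}=(2\pi)^{n/2}e^{g(m)}/\|\sigma^m_0\|_{L^2(X_P,L)}$, which is exactly the diagonal entry defining $A^\psi_{g,\infty}$. Since all diagonal entries of $A^\psi_{g,\infty}$ are nonzero (as $e^{g(m)}>0$), the limit lies in $GL(\#(P\cap\mathbb{Z}^n),\mathbb{C})$, and convergence in the finite-dimensional matrix group is simply entrywise convergence, which we have just established.

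The only step that requires any care is justifying that the convergence statement of Theorem \ref{thm:KMN1thm}, stated as a limit of distributional half-form-corrected sections, may be combined with the scalar asymptotic (\ref{norms}) to yield convergence of $A^\psi_{g,s}$ viewed as an element of $GL(\#(P\cap\mathbb{Z}^n),\mathbb{C})$. This is formal once we fix, as the statement of the theorem does, the identifications $\mathcal{H}_{\Po_{g_s}}\simeq\mathbb{C}^{\#(P\cap\mathbb{Z}^n)}\simeq\mathcal{H}_{\Po_\mathbb{R}}$ via the indicated bases: both the scalar prefactor and the normalized basis vector on the target converge as $s\to\infty$, and the matrix is diagonal, so no interaction between different integral points $m,m'$ needs to be controlled.
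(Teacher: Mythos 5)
Your proposal is correct and follows the paper's own proof almost verbatim: the paper likewise first computes $A^\psi_{g,s}(\sigma^m_0/\|\sigma^m_0\|) = e^{-s\psi(m)}\sigma^m_s/\|\sigma^m_0\|$ and then cites (\ref{norms}) and Theorem \ref{thm:KMN1thm} to conclude, while you have simply spelled out the remaining limit computation (factoring out $\|\sigma^m_s\|$, applying the asymptotic $\|\sigma^m_s\|\sim\pi^{n/4}e^{g(m)+s\psi(m)}$, and multiplying by the constant $2^{n/2}\pi^{n/4}$ from the distributional limit) that the paper leaves implicit.
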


\begin{proof}
We can compute
\begin{align*}
\lim_{s\to\infty}\left( e^{s\hat\psi}\otimes e^{\ii s{\mathcal{L}}_{\psi}}\right)
\circ e^{-s\hat\psi_{\mathbb{{\mathbb{R}}}}}
\left(  \frac{\sigma_{0}^{m}}{\|\sigma^m_{0}\|_{L^{2}(X_P,L)}}\right) &= \frac{e^{-s\psi(m)}}{\|\sigma^m_0\|_{L^2(X_P,L)}} \sigma_{s}^{m}.
\end{align*}
so the result follows from (\ref{norms}) and Theorem \ref{thm:KMN1thm}.
\end{proof}

\subsection{Starting at the vertical polarization}
\label{subsec:startvert}

Recall that the quantization of $T^*\T^n$ in the vertical polarization ${\mathcal P}_{v} = \operatorname{span}_\mathbb{C}\left\{\frac{\partial}{\partial x^j}\right\}_{j=1,\dots,n},$ where we identify $\check X_P \cong \T^n\times \check P \subset (\C^\times)^n$, is given simply by
\[
{\mathcal H}_{{\mathcal P}_{v}}=L^2(\T^n),
\]
which is generated by the monomials $\{e^{\ii m\theta}\}_{m\in \Z^n}.$

In this section, we shall see that some of these monomials, namely the ones corresponding to integral points inside the moment polytope $P$, produce the monomial holomorphic sections of $L\to X_P$ in the K\"ahler quantization of $(X_P,\omega,I_g)$ when acted upon by the time-$\ii$ Hamiltonian evolution of the symplectic potential $g$, considered as a function on $X_P$.
Note that the Hamiltonian function used in the complexification process, the symplectic potential $g$, is not smooth on $X_P$ (compare to \cite{KMN2}): it is singular along $\mu^{-1}(\partial P)$. This allows the Legendre transform $y=\frac{\partial g}{\partial x}$ to generate all of $\R^n$ from the bounded region $\check P$.

\begin{theorem}
\label{timeiflowmonomials}
The time-$\ii$ flow of the vector field $X_{g}$ induces an isomorphism from the algebra $\mathcal{A}$ generated by the characters $e^{\ii m\theta}, m\in{\mathbb{Z}}^{n}$, of ${\mathbb{T}}^{n}$ onto the algebra generated by $I_{g}$-monomial (meromorphic) functions on $(X_{P}, I_{g})$
\begin{align*}
\exp(\ii X_{h}) &: \mathcal{A}\rightarrow\mathcal{O}_{I_g}(\check X_P)\\
e^{\ii m \theta} &\mapsto w^{m}.
\end{align*}
\end{theorem}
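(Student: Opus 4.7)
The plan is to compute the action of $e^{\ii X_g}$ on a character $e^{\ii m\theta}$ directly as a convergent power series, observe that the result is exactly the monomial $w^m$, and then deduce that the induced map is an algebra isomorphism from multiplicativity of the exponential and linear independence of characters.

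First, I would use the explicit formula $X_g = -\sum_{j=1}^n y_j\,\partial/\partial\theta_j$ from (\ref{hamg}). The crucial point is that the coefficients $y_j = \partial g/\partial x^j$ depend only on the action coordinates $x$, while the character $e^{\ii m\theta}$ depends only on the angle coordinates. Therefore
\[
X_g\bigl(e^{\ii m\theta}\bigr) = -\ii\langle m,y\rangle\,e^{\ii m\theta},
\]
and since $X_g$ annihilates $\langle m,y\rangle$ (as a function of $x$ alone), iteration gives
\[
X_g^k\bigl(e^{\ii m\theta}\bigr) = \bigl(-\ii\langle m,y\rangle\bigr)^k e^{\ii m\theta}.
\]
At each point of $\check X_P$ the quantity $\langle m,y\rangle$ is a fixed real number, so summing the power series yields
\[
e^{\ii X_g}\bigl(e^{\ii m\theta}\bigr) = \sum_{k=0}^\infty \frac{\ii^k}{k!}X_g^k\bigl(e^{\ii m\theta}\bigr) = e^{\langle m,y\rangle}\,e^{\ii m\theta} = e^{\langle m,y+\ii\theta\rangle} = w^m,
\]
using the identification (\ref{eqn:w}). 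This simultaneously establishes convergence pointwise on $\check X_P$ and identifies the limit.

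To promote this to an algebra isomorphism, I would first extend linearly to finite sums of characters. Multiplicativity $e^{\ii X_g}(fg) = e^{\ii X_g}(f)\cdot e^{\ii X_g}(g)$ on the algebra $\mathcal A$ follows because on each character the operator acts by multiplication by an $x$-dependent scalar, and more invariantly because $X_g$ is a derivation so $e^{\ii X_g}$ (restricted to the subalgebra $\mathcal A$ where the series converges termwise) is an algebra homomorphism. Sending $e^{\ii m\theta}\mapsto w^m$ is then consistent with $w^m\cdot w^{m'}=w^{m+m'}$. Injectivity is immediate since the characters $\{e^{\ii m\theta}\}_{m\in\Z^n}$ and the monomials $\{w^m\}_{m\in\Z^n}$ are each linearly independent, and surjectivity onto the monomial algebra is obvious from the formula.

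There is no real obstacle here; the only conceptual subtlety is that $g$ is not smooth on all of $X_P$ (it is singular along $\mu^{-1}(\partial P)$), so $X_g$ and its flow are only defined on the open dense orbit $\check X_P$, which is precisely why the target is written as the algebra of $I_g$-meromorphic monomials on $X_P$ (equivalently, holomorphic functions on $\check X_P$). The computation above is genuinely a convergent Taylor series rather than a formal manipulation because at each fixed point the operator $X_g$ reduces to multiplication by a constant on each character, so no analytic-continuation machinery is required beyond the elementary one.
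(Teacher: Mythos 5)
Your proof is correct and takes essentially the same approach as the paper, whose entire argument is the one-line computation $e^{\ii X_g} e^{\ii m\theta} = e^{m(y+\ii\theta)} = w^m$. Your expansion---observing that $X_g$ acts on each character as multiplication by the $\theta$-independent scalar $-\ii\langle m,y\rangle$ so the exponential series converges termwise to $e^{\langle m,y\rangle}e^{\ii m\theta}$, together with the remarks on multiplicativity and injectivity---simply spells out the steps the paper leaves implicit.
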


\begin{proof}
We have from (\ref{hamg}) that for any $m\in \Z^n$, $e^{\ii X_g} e^{im\theta} = e^ {m(y+\ii \theta)} = w^m.$
\end{proof}

\bigskip

The prequantum operator associated to $g$ is
\begin{equation}
\label{prequantumg}
\hat g = \ii \nabla_{X_g} + g = \ii X_g -h,
\end{equation}
where $h = x\cdot y -g$ is the K\"ahler potential for $(X_P,\omega,I_g)$.

Consider the following subset of the set of characters of $\T^n$, which is a finite-dimensional subspace of the Hilbert space for the quantization of $T^*\T^n$ in the vertical polarization:
\[
{\mathcal A}_P = \operatorname{span}_\mathbb{C} \left\{e^{\ii m\theta}, m\in P\cap\Z^n\right\}.
\]

Let $\Theta = d\theta_1\wedge\cdots\wedge d\theta_n.$

\begin{theorem}
The operator
\[
e^{\hat g}\otimes e^{\ii {\mathcal L}_{X_g}}: {\mathcal A}_P {\mathbbm 1}^{U(1)}\otimes \sqrt{d\Theta} \to {\mathcal H}_{{\mathcal P}_g}
\]
is an isomorphism.
\end{theorem}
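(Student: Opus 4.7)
The plan is a direct computation: evaluate the operator on the finite basis $\{e^{\ii m\theta}\mathbbm{1}^{U(1)}\otimes\sqrt{d\Theta}\}_{m\in P\cap\Z^n}$ of $\mathcal{A}_P\mathbbm{1}^{U(1)}\otimes\sqrt{d\Theta}$ and show that it lands, up to a single nonzero scalar, on the standard basis $\{\sigma_0^m\}_{m\in P\cap\Z^n}$ of $\mathcal{H}_{\Po_g}$ coming from (\ref{hilbertgs}) at $s=0$. Since both finite-dimensional spaces have the same dimension $\#(P\cap\Z^n)$, diagonality in these bases with a single nonzero eigenvalue will immediately give the claimed isomorphism.

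First I would compute the action of the prequantum operator $\hat g=\ii\nabla_{X_g}+g$ (from (\ref{prequantumg})) on sections of the form $f\,\mathbbm{1}^{U(1)}$. Using $\nabla\mathbbm{1}^{U(1)}=(-\ii x\,d\theta)\mathbbm{1}^{U(1)}$ together with $X_g=-\sum_j y_j\,\partial/\partial\theta_j$ from (\ref{hamg}), one has $d\theta(X_g)=-y$, hence $\nabla_{X_g}\mathbbm{1}^{U(1)}=\ii(x\cdot y)\mathbbm{1}^{U(1)}$, and therefore
\[
\hat g\bigl(f\,\mathbbm{1}^{U(1)}\bigr)=\bigl(\ii X_g f - h f\bigr)\mathbbm{1}^{U(1)},
\]
with $h=x\cdot y - g$. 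The key observation is that $h$ depends only on the action coordinates $x$ while $X_g$ differentiates only in the $\theta$ directions, so $X_g(h)=0$. Hence the operators $\ii X_g$ and multiplication by $-h$ commute on functions, and the power series $e^{\hat g}$ factors as
\[
e^{\hat g}\bigl(f\,\mathbbm{1}^{U(1)}\bigr)=e^{-h}\bigl(e^{\ii X_g}f\bigr)\,\mathbbm{1}^{U(1)}.
\]

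Next I would apply this to the basis elements. For $f=e^{\ii m\theta}$ with $m\in P\cap\Z^n$, Theorem \ref{timeiflowmonomials} gives $e^{\ii X_g}e^{\ii m\theta}=w^m$; the convergence of the series is transparent since $X_g e^{\ii m\theta}=-\ii(m\cdot y)\,e^{\ii m\theta}$ iterates to scalar powers of $-\ii(m\cdot y)$, whose formal exponential is $e^{m\cdot y}$, producing $w^m=e^{m\cdot(y+\ii\theta)}$. For the half-form factor, Theorem \ref{timeiflowprop}(b) gives $e^{\ii\mathcal{L}_{X_g}}\Theta=\ii^{-n}dZ$, and the consistency rule for square roots (analogous to (\ref{evolsqrtdz})) yields $e^{\ii\mathcal{L}_{X_g}}\sqrt{d\Theta}=\ii^{-n/2}\sqrt{dZ}$. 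Combining,
\[
\bigl(e^{\hat g}\otimes e^{\ii\mathcal{L}_{X_g}}\bigr)\bigl(e^{\ii m\theta}\mathbbm{1}^{U(1)}\otimes\sqrt{d\Theta}\bigr)=\ii^{-n/2}\,w^m e^{-h}\mathbbm{1}^{U(1)}\otimes\sqrt{dZ}=\ii^{-n/2}\sigma_0^m.
\]

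This exhibits the map as diagonal, with common nonzero eigenvalue $\ii^{-n/2}$, in the basis indexed by $P\cap\Z^n$ on both sides, and thus as an isomorphism. The only mildly delicate point in the argument is bookkeeping for the half-form factor, which is already handled by the conventions established around (\ref{evolsqrtdz}) and Theorem \ref{timeiflowprop}(b); the convergence issue for $e^{\ii X_g}$ is a clean monomial-by-monomial calculation and presents no analytic obstacle because each character $e^{\ii m\theta}$ is a simultaneous eigenvector of the scalar operator $X_g$.
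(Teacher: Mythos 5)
Your proposal is correct and follows essentially the same route as the paper's proof: evaluate the operator on the monomial basis $\{e^{\ii m\theta}\mathbbm{1}^{U(1)}\otimes\sqrt{d\Theta}\}_{m\in P\cap\Z^n}$, invoke Theorem~\ref{timeiflowmonomials} for the function part and Theorem~\ref{timeiflowprop}(b) for the half-form part, and read off the isomorphism from (\ref{hilbertgs}). The only difference is that you spell out the intermediate steps more carefully than the paper does --- in particular the observation that $X_g(h)=0$, which lets $e^{\hat g}=e^{\ii X_g - h}$ factor as $e^{-h}e^{\ii X_g}$, and the constant phase $\ii^{-n/2}=(-\ii)^{n/2}$ coming from $e^{\ii\mathcal{L}_{X_g}}\sqrt{d\Theta}$, which the paper's displayed equation silently drops (this is harmless for the isomorphism claim, but your bookkeeping is the more accurate statement).
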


\begin{proof}
The result follows from (\ref{hilbertgs}), Proposition \ref{timeiflowprop} $(b)$ and  Theorem \ref{timeiflowmonomials} since
\[
e^{\hat g}\otimes e^{i{\mathcal L}_{X_g}} \left(e^{\ii m\theta} {\mathbbm 1}^{U(1)}\otimes \sqrt{d\Theta}\right) = w^m e^{-h} {\mathbbm 1}^{U(1)} \otimes \sqrt{dZ} = \sigma^m,
\]
for all $m\in P\cap \Z^n$.
\end{proof}

In order to study the unitarity properties of this isomorphism, we define a quantum operator $\hat g_\R:{\mathcal A}_P {\mathbbm 1}^{U(1)}\otimes \sqrt{d\Theta}\to {\mathcal A}_P {\mathbbm 1}^{U(1)}\otimes \sqrt{d\Theta}$ by setting
\[
\hat g_\R \left(e^{\ii m\theta}{\mathbbm 1}^{U(1)}\otimes \sqrt{d\Theta}\right) = g(m) e^{\ii m\theta}{\mathbbm 1}^{U(1)}\otimes \sqrt{d\Theta}.
\]
This is analogous to what we did in \ref{eqn:deltam}.

As in \cite[eq. 3.12]{FMMN2}, define an inner product in $\mathcal{A}_{\Po}$ such that $\|e^{im\theta}\otimes\sqrt{d\Theta}\|=\pi^{n/4}.$
As in (\ref{eqn:Ags}), consider the operator $A_{0,1}^{g_s}:{\mathcal A}_P {\mathbbm 1}^{U(1)}\otimes \sqrt{d\Theta} \rightarrow {\mathcal H}_{{\mathcal P}_{g_s}}$ defined by
\[
A_{0,1}^{g_s}:=\left(e^{\hat g_s} \otimes e^{\ii{\mathcal L}_{X_{g_s}}}\right) \circ e^{-\widehat{(g_s)}_\R}.
\]
The following corollary follows from (\ref{norms}) and Theorem \ref{thm:KMN1thm}.
\begin{theorem}
There exists a unitary operator $U:{\mathcal A}_P {\mathbbm 1}^{U(1)}\otimes \sqrt{d\Theta} \to {\mathcal H}_{{\mathcal P}_{\mathbb{R}}}$ such that $U=\lim_{s\rightarrow \infty} A_{0,1}^{g_s},$ where the limit is understood as in Theorem \ref{thm:Ags}.
\end{theorem}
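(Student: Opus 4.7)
The plan is to mimic closely the proof of Theorem \ref{thm:Ags}, applying $A_{0,1}^{g_s}$ to each basis vector $e^{\ii m\theta}{\mathbbm 1}^{U(1)}\otimes\sqrt{d\Theta}$ with $m\in P\cap\Z^n$ and taking $s\to\infty$.

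First I would compute $A_{0,1}^{g_s}$ on basis elements. By definition $\widehat{(g_s)}_\R$ acts diagonally, so $e^{-\widehat{(g_s)}_\R}(e^{\ii m\theta}{\mathbbm 1}^{U(1)}\otimes\sqrt{d\Theta}) = e^{-g_s(m)} e^{\ii m\theta}{\mathbbm 1}^{U(1)}\otimes\sqrt{d\Theta}$, and by the preceding theorem (the isomorphism $e^{\hat g_s}\otimes e^{\ii\mathcal{L}_{X_{g_s}}}$ sending the character basis to the monomial basis $\sigma^m_s$) we get
\[
A_{0,1}^{g_s}\left(e^{\ii m\theta}{\mathbbm 1}^{U(1)}\otimes\sqrt{d\Theta}\right) = e^{-g_s(m)}\sigma^m_s.
\]

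Next I would insert the $L^2$-norm of $\sigma^m_s$ to connect with the limit result of \cite{KMN1}. Using the asymptotic formula (\ref{norms}), namely $\|\sigma^m_s\|_{L^2(X_P,L)}\sim\pi^{n/4}e^{g_s(m)}$, the right-hand side is asymptotic to $\pi^{n/4}\,\sigma^m_s/\|\sigma^m_s\|_{L^2(X_P,L)}$. Applying Theorem \ref{thm:KMN1thm} then gives the pointwise limit
\[
\lim_{s\to\infty} A_{0,1}^{g_s}\left(e^{\ii m\theta}{\mathbbm 1}^{U(1)}\otimes\sqrt{d\Theta}\right) = 2^{n/2}\pi^{n/2}\,\delta^m\otimes\sqrt{dX},
\]
for each $m\in P\cap\Z^n$. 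Since $\mathcal{A}_P{\mathbbm 1}^{U(1)}\otimes\sqrt{d\Theta}$ is finite-dimensional, convergence on the basis implies convergence in the space of linear maps, which gives existence of the limit operator $U$ and identifies it with the diagonal map prescribed above.

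Finally I would verify unitarity. The inner product on $\mathcal{A}_P{\mathbbm 1}^{U(1)}\otimes\sqrt{d\Theta}$ is defined (as in the paragraph just above the theorem) so that $\{e^{\ii m\theta}{\mathbbm 1}^{U(1)}\otimes\sqrt{d\Theta}\}_{m\in P\cap\Z^n}$ is orthogonal with each vector of norm $\pi^{n/4}$. The images $\{2^{n/2}\pi^{n/2}\delta^m\otimes\sqrt{dX}\}$ are orthogonal in $\mathcal{H}_{\mathcal{P}_\R}$ since distinct $\delta^m$'s have disjoint support, and the inner product on $\mathcal{H}_{\mathcal{P}_\R}$ used in \cite{BFMN,KMN1} is normalized precisely so that $\|\delta^m\otimes\sqrt{dX}\|_{\mathcal{H}_{\mathcal{P}_\R}} = 2^{-n/2}\pi^{-n/4}$, matching $\|U(e^{\ii m\theta}{\mathbbm 1}^{U(1)}\otimes\sqrt{d\Theta})\| = 2^{n/2}\pi^{n/2}\cdot 2^{-n/2}\pi^{-n/4} = \pi^{n/4}$. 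Thus $U$ preserves norms of basis vectors, hence is unitary.

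The only delicate step, and the one where I expect to have to be careful, is the bookkeeping of constants to certify exact (not merely asymptotic) unitarity of the limit; all constants must be tracked through (\ref{norms}), Theorem \ref{thm:KMN1thm}, and the inner product conventions on both sides. Once those normalizations are aligned as in \cite{BFMN,KMN1,FMMN2}, the remaining argument is a direct computation on the finite basis indexed by $P\cap\Z^n$.
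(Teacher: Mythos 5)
Your proposal is correct and follows essentially the same route as the paper, which simply invokes the norm asymptotic (\ref{norms}) together with Theorem \ref{thm:KMN1thm}; you supply the explicit computation the paper suppresses. The calculation $A_{0,1}^{g_s}(e^{\ii m\theta}{\mathbbm 1}^{U(1)}\otimes\sqrt{d\Theta})=e^{-g_s(m)}\sigma_s^m\sim\pi^{n/4}\sigma_s^m/\|\sigma_s^m\|_{L^2}\to 2^{n/2}\pi^{n/2}\,\delta^m\otimes\sqrt{dX}$ is correct and matches the constant pattern in Theorem \ref{thm:Ags} (whose $(2\pi)^{n/2}=2^{n/2}\pi^{n/2}$ arises from exactly the same two factors $\pi^{n/4}$ and $2^{n/2}\pi^{n/4}$). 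The one point you flag yourself, the normalization $\|\delta^m\otimes\sqrt{dX}\|=2^{-n/2}\pi^{-n/4}$, is not written out in the paper, but it is the normalization forced by requiring that the $L^2$-normalized sections in Theorem \ref{thm:KMN1thm}, which are unit vectors, converge to unit vectors; combined with the source-side convention $\|e^{\ii m\theta}{\mathbbm 1}^{U(1)}\otimes\sqrt{d\Theta}\|=\pi^{n/4}$, this gives exact unitarity, as you conclude. So the argument closes, and your care with constants is warranted since the paper's parenthetical identification of $\mathcal{H}_{\Po_{\mathbb{R}}}$ with $\mathbb{C}^{\#(P\cap\mathbb{Z}^n)}$ via $\{\delta^m\otimes\sqrt{dX}\}$ reads as though those are unit vectors, which they are not; your reading is the consistent one.
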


\begin{remarks}
\begin{enumerate}
\item The map $A^g_{0,s}=A^{sg}_{0,1}$ for $s\neq1$ would map ${\mathcal A}_P {\mathbbm 1}^{U(1)}\otimes \sqrt{d\Theta}$ to the quantization of $X_P$ with respect to a K\"ahler polarization corresponding to a metric with cone angles, although we do not study these more general maps here.

\item The results in this section show that the quantum Hilbert spaces for K\"ahler quantizations of toric manifolds of dimension $2n$ can all be obtained by complex-time evolution from appropriate finite-dimensional subspaces of the Hilbert space for the Schr\"odinger quantization of the cotangent bundle $T^{*}{\mathbb{T}}^{n}$.
\end{enumerate}
\end{remarks}

\section{Singular indefinite K\"{a}hler polarizations}
\label{sssik}

\subsection{{Geometry of indefinite K\"{a}hler polarizations}}

In this section, we consider again the geodesic ray of K\"ahler structures on $X_P$ induced by the symplectic potentials $g_{s}:=g_{P}+s\psi$, where $\psi\in C^\infty_+(P)$. The results of this section, however, hold \emph{mutatis mutandi} for general $\psi\in C^{\infty}(P)$. For all $s>0$, $g_{s}$ induces a K\"{a}hler structure on $X_P$ in the usual way (\ref{eqn:KahlerStruct}). Our task here is to investigate the behavior of $g_{-s}$, as well as the resulting K\"{a}hler-type geometry of $X_P$, for both finite $s>0$ and for large $s>0$.

First, observe that for $s$ sufficiently large, $g_{-s}$ will no longer be convex on $\check P$. This has several consequences. Let $s^{cvx}\geq0$ be the smallest real number such that $g_{s}$ is convex for all $s>-s^{cvx}$.

To simplify notation in this section, we denote the Hessian of the symplectic potential $g_s$ by $H_s:=H_{g_s}.$ Then for all $s\leq-s^{cvx}$, the zero locus $Z_{s}:=\{x\in P:\det H_{s}(x)=0\}$ of the Hessian is nonempty. The set of possible signatures of $n\times n$ matrices with nonzero determinant is $\{(a,b):a+b=n\}.$ We may decompose $P\setminus Z_{s}$ into a union of open sets
\[
P\setminus Z_{s}= \bigcup\limits_{a+b=n}P_s^{(a,b)},
\]
where $P^{(a,b)}_s:=\{x\in P:\operatorname{sig}(H_{s}(x))=(a,b)\}$. Each open set $P^{(a,b)}_s$ can be written as a disjoint union of open connected neighborhoods, which we call \emph{islands}:
\[
P_s^{(a,b)}=\bigcup_{j=1}^{n_s^{(a,b)}}U_{s,j}^{(a,b)}.
\]
It is possible that $n_s^{(a,b)}=\infty$ (perhaps even uncountably infinite, but we will keep our notation simple and assume at most a countable number of islands); we will give an example below.

As is clear from (\ref{eqn:KahlerStruct}), the K\"{a}hler geometry of $X_P$ is controlled by the Hessian $H_{s}$ of $g_{s}$, whose behavior is essentially determined by the signature, a constant on each island $U_{s,j}^{(a,b)}$.

For $s>-s^{cvx}$, it is well known that the resulting path of metrics $s\in(-s^{cvx},\infty)\mapsto\gamma_{s}$ on $X_P$ is a Mabuchi geodesic in the space of K\"{a}hler metrics in the class of $\omega$. The constant $-s^{cvx}$ is what Rubinstein and Zelditch call the convex lifespan of the geodesic \cite{Rubinstein-Zelditch1,Rubinstein-Zelditch2}. They show that for $s<-s^{cvx}$, the path $s\mapsto\gamma_{s}$ fails to be even a weak solution to the geodesic equation and that as $s$ approaches $-s^{cvx}$ from above, the metric develops singularities\footnote{Rubinstein and Zelditch actually parameterize $t=-s$.}. On the other hand, they show that the metric remains positive definite on a dense set (the complement of the singular locus) and that on this dense set, the path $\gamma_{t}$ still solves the geodesic equation (actually, they work in the equivalent formulation in terms of the homogeneous Monge--Amp\`{e}re equation).

For all $s$ there are positive islands (that is, islands where the signature of $H_s$ is positive definite) around each vertex. As $s$ approaches the convex lifespan, the metric becomes singular. The analysis of Rubinstein--Zelditch, from the fixed-symplectic point of view, cuts out a neighborhood of the union of the indefinite islands on $X_P$ and glues the result, thus yielding a toric manifold with a singular metric which, outside the singular set, is biholomorphic to the original $X_P$. It is, however, interesting to investigate $X_P$ in the indefinite region at both the level of geometry and in terms of the behavior of the quantization of $X_P$.

\bigskip
We begin with some remarks on the geometry of the indefinite islands. First, because we consider strongly convex $\psi$, we can show that there is some minimal $s^{neg}\geq s^{cvx}$ such that for $s<-s^{neg}$, there is a family of connected negative-definite islands $U_{s}^{(0,n)}$ which ``fills'' $P$ as $s\rightarrow-\infty$. On the other hand, for each vertex $v\in P$ and for all $s$ there is a positive-definite island $U_{s}^{v,(n,0)}$ containing $v$. After proving these two results, we will give examples which show that these results are the best one can expect.

We say that a family of islands $\{U_{s}^{(a,b)}:s\in \mathcal{I}\subset\mathbb{R}\}$ is \emph{increasing} if $U_{s}^{\varepsilon}\supset U_{s^{\prime}}^{\varepsilon}$ whenever $s> s^{\prime}.$

\begin{theorem}
Let $K$ be a compact subset of the interior $\check{P}$ of the moment polytope of $X_P.$ Then there exists some $s_{K}$ and an increasing family $\{U_{-s}^{K,(0,n)}:s\in(s_{K},\infty)\}$ of islands such that $K\subset U_{s}^{K,(0,n)}$ for all $s\,<-s_{K}$. Moreover, for any compact subset $K\subset\check{P}$,
\[
\bigcup_{s<-s_{K}}U_{s}^{K,(0,n)}=\check{P}.
\]
\end{theorem}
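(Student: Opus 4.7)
The plan is to establish a uniform Hessian estimate that forces negative definiteness on any preassigned compact $\tilde K\subset\check P$, and then to use it to define and track islands. Because $\psi$ is strongly convex and $P$ is compact, the continuous matrix-valued function $x\mapsto H_\psi(x)$ is uniformly positive definite: there exists $c>0$ with $H_\psi(x)\geq cI$ for all $x\in P$. Because $g_P$ is smooth on $\check P$, for any compact $\tilde K\subset\check P$ there is $M=M(\tilde K)$ with $H_{g_P}(x)\leq MI$ on $\tilde K$. Combining these yields $H_{g_{-s}}(x)=H_{g_P}(x)-sH_\psi(x)\leq(M-sc)I$, which is negative definite on $\tilde K$ whenever $s>M/c$.

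Given $K$, I will first replace it by a connected thickening. Pick $\epsilon>0$ small enough that the inner offset $\tilde K:=\{x\in P:\ell_j(x)\geq\epsilon,\ j=1,\dots,r\}$ is a convex (hence connected) compact subset of $\check P$ containing $K$, and set $s_K:=M(\tilde K)/c$. For each $s>s_K$, I define $U_{-s}^{K,(0,n)}$ to be the connected component of the negative-definite region $P_{-s}^{(0,n)}$ that contains $\tilde K$; the key estimate makes this well defined and forces $K\subset U_{-s}^{K,(0,n)}$.

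For monotonicity of the family, I simultaneously diagonalize the pair $(H_\psi(x),H_{g_P}(x))$ at each $x$: the signature of $H_{g_P}(x)-sH_\psi(x)$ agrees with that of $H_\psi(x)^{-1/2}H_{g_P}(x)H_\psi(x)^{-1/2}-sI$. Hence $H_{g_{-s}}(x)$ is negative definite iff $s$ exceeds the largest eigenvalue of $H_\psi(x)^{-1/2}H_{g_P}(x)H_\psi(x)^{-1/2}$, which is an upward-closed condition in $s$. Therefore $P_{-s}^{(0,n)}$ grows with $s$, and since the connected component of a growing open set that contains a fixed connected subset also grows, $\{U_{-s}^{K,(0,n)}\}_{s>s_K}$ is an increasing family.

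For the final assertion, fix $x_0\in\check P$. Join $\tilde K$ to $x_0$ by a path $\gamma\subset\check P$ and form the compact connected set $\hat K:=\tilde K\cup\gamma$. The key estimate, applied to $\hat K$, produces some $s_{\hat K}\geq s_K$ such that for $s>s_{\hat K}$ the Hessian $H_{g_{-s}}$ is negative definite on $\hat K$; then $\hat K$ lies in a single connected component of $P_{-s}^{(0,n)}$, which meets $K$ and therefore equals $U_{-s}^{K,(0,n)}$. Hence $x_0\in U_{-s}^{K,(0,n)}$. I expect the main subtlety to be connectedness bookkeeping: since $K$ may be disconnected, the islands must be defined through a connected compact enlargement, and one must enlarge again (path-wise) to reach a given target point.
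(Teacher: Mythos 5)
Your proof is correct and follows the same essential strategy as the paper's: exploit the uniform positive definiteness of $H_\psi$ on the compact polytope $P$ together with the local boundedness of $H_{g_P}$ on compact subsets of $\check P$ to force $H_{g_{-s}}=H_{g_P}-sH_\psi$ to be negative definite on any preassigned compact for large $s$, then exhaust $\check P$. The one genuine point of difference is that you address a bookkeeping issue the paper leaves implicit: for a \emph{disconnected} $K$, negative definiteness of $H_{g_{-s}}$ on $K$ alone does not place $K$ in a \emph{single} island, so the paper's assertion ``there is a negative-definite island containing $K$'' is not immediate from its choice of $s_K$ (the finite-subcover threshold controls only $K$, not a connected set joining its components). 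You resolve this by replacing $K$ with a convex inner offset $\tilde K$ before defining $s_K$ and the island, which is the right fix; the paper avoids the issue in practice only because its final exhaustion can be taken by connected sets. Your Sylvester-law-of-inertia argument for monotonicity of the negative-definite region in $s$ is also a clean way to make precise what the paper treats pointwise (``for any $s<-s(x)$, $H_s(x)$ remains negative definite''), and your path-joining argument for the exhaustion of $\check P$ is an equivalent reformulation of the paper's increasing-sequence-of-compacts argument. In short: same route, with your version supplying two details (single-island containment for disconnected $K$, global monotonicity) that the paper's proof leaves to the reader.
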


\begin{proof}
Since Hessian of $\psi$ is bounded on $P$ and hence on $K$, for each point in $K$ there is some $s(x)$ such that $H_{-s(x)}(x)<0.$ Since $H_{-s(x)}<0$ is an open condition, there is a an open neighborhood $U_{-s(x)}^{x,(0,n)}$ of $x$ on which $H_{-s(x)}<0$. Moreover, for any $s<-s(x),$ $H_{s}(x)$ remains negative definite on $U_{-s(x)}^{x,(0,n)}$. The compact set $K$ is necessarily covered by such open neighborhoods. Choose a finite subcover and let $s_{K}$ be the largest $s(x)$ which occurs in this finite subcover. Then for any $s>s_{K}$, $H_{-s}(x)<0$ for all $x\in K$.

It follows that for any $K$, there is a negative-definite island, which we denote by $U_{s_{K}}^{K,(0,n)}$, containing $K$. Moreover, if $K_{1}$ and $K_{2}$ are nondisjoint compact subsets of $\check{P}_{X}$ and $s_{K_{1},K_{2}}:=\max\{s_{K_{1}},s_{K_{2}}\}$, then $U_{s_{K_{1},K_{2}}}^{K_{1}\cup K_{2},(0,n)}$ contains $K_{1},K_{2},U_{s_{K_{1}}}^{K_{1},(0,n)}$ and $U_{s_{K_{2}}}^{K_{2},(0,n)}$. That is, $U_{s_{K_{1},K_{2}}}^{K_{1},(0,n)} =U_{s_{K_{1},K_{2}}}^{K_{2},(0,n)}=U_{s_{K_{1},K_{2}}}^{K_{1}\cup K_{2},(0,n)}.$ To obtain the desired family, apply this to an increasing sequence $K_{1}\subset K_{2}\subset\cdots$ of compact subsets of $\check{P}$ such that $\bigcup K_{j}=\check{P}$.
\end{proof}

\begin{theorem}
For each vertex $v\in P$ there is a family of islands $\{U_{s}^{v,(n,0)}:s\in\mathbb{R}\}$ such that for each $s\in\mathbb{R}$, $v\in U_{s}^{v,(n,0)}.$
\end{theorem}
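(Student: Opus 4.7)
The plan is to exploit the logarithmic singularity of the canonical symplectic potential $g_P$ at the vertex $v$, which dominates any finite multiple of the smooth perturbation $sH_\psi$. I would begin by computing directly from \eqref{eqn:gP} that
\[
H_{g_P}(x) \;=\; \tfrac12\sum_{j=1}^{r}\frac{1}{\ell_j(x)}\,\nu_j\nu_j^{T},
\qquad
H_{g_s}(x) \;=\; H_{g_P}(x) + s\,H_\psi(x).
\]
By the Delzant condition, exactly $n$ facets of $P$ meet at $v$; let their indices be $i_1,\dots,i_n$, so that $\nu_{i_1},\dots,\nu_{i_n}$ form an $\R$-basis of $\R^n$. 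The strategy is to show that the part of $H_{g_P}$ coming from these $n$ facets blows up in the positive-definite cone as $x\to v$, while everything else remains bounded on a neighborhood of $v$.

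Accordingly, I would split $H_{g_P}$ into the singular part
\[
S(x):=\tfrac12\sum_{k=1}^{n}\ell_{i_k}(x)^{-1}\,\nu_{i_k}\nu_{i_k}^{T}
\]
and the remainder $R(x):=H_{g_P}(x)-S(x)$. On a sufficiently small closed neighborhood $V$ of $v$ in $P$, each $\ell_j$ with $j\notin\{i_1,\dots,i_n\}$ is bounded away from zero, so $R$ is uniformly bounded in operator norm on $V$; similarly, since $\psi\in C^\infty(P)$, the matrix $sH_\psi$ is uniformly bounded on $V$ by $|s|\cdot\sup_V\|H_\psi\|$. Let $M(s,V)$ be a common bound for $\|R\|$ and $\|sH_\psi\|$ on $V$. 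For the singular part, since all summands are positive semidefinite, whenever $\max_k\ell_{i_k}(x)\le\varepsilon$ one has the matrix inequality $S(x)\succeq(2\varepsilon)^{-1}A$, where $A:=\sum_{k=1}^{n}\nu_{i_k}\nu_{i_k}^{T}$ is positive definite by the linear independence of the $\nu_{i_k}$. Hence $\lambda_{\min}(S(x))\ge\lambda_{\min}(A)/(2\varepsilon)$.

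Combining these estimates, for any fixed $s\in\R$ I first pick $V$ and then shrink to a sub-neighborhood $V_s\subset V$ of $v$ on which $\varepsilon$ is small enough that $\lambda_{\min}(A)/(2\varepsilon) > 2M(s,V)$. On $V_s\cap\check P$ the matrix $H_{g_s}$ is then strictly positive definite, so $V_s\cap\check P\subset P_s^{(n,0)}$; taking the connected component of $P_s^{(n,0)}$ containing this set (together with the vertex $v$) gives the desired island $U_s^{v,(n,0)}$. I do not anticipate any substantial obstacle: the two essential inputs are the Delzant-forced linear independence of the normals at $v$ (which makes $A$ positive definite) and the boundedness of the non-singular contributions $R$ and $sH_\psi$ on a neighborhood of $v$, both of which are local and immediate.
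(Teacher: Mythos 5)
Your proof is correct and follows essentially the same approach as the paper: isolate the logarithmic singularity of $g_P$ at the vertex, note that its contribution to the Hessian blows up, and observe that the bounded perturbation $sH_\psi$ cannot overcome this for $x$ sufficiently close to $v$. The paper's own proof is considerably more compressed --- it just records the formula for $H_{g_P}$, asserts that it is unbounded near $v$, and concludes --- whereas you make explicit the step the paper leaves to the reader: that mere unboundedness is not enough, and one must use the Delzant condition (exactly $n$ facets meet at $v$ with linearly independent primitive normals) to conclude that the divergent part $S(x)\succeq (2\varepsilon)^{-1}A$ with $A=\sum_{k=1}^n\nu_{i_k}\nu_{i_k}^T$ positive definite, and separately that the remaining $r-n$ terms of $H_{g_P}$ stay bounded near $v$. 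This is a genuine tightening: your decomposition into $S+R$ and the explicit eigenvalue comparison supply the missing quantitative input, and in passing you correct a small slip in the paper's proof, which writes ``$\ell_k(v)=0$ for $k=1,\dots,r$'' when it should be $k=1,\dots,n$.
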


\begin{proof}
The Hessian of $g_{P}$ is $\left(\tfrac12\sum_k \frac{\nu_k^i\nu_k^j}{\ell_{k}(x)}\right)$ (see equation (\ref{t11})), where we order the terms so that $\ell_k(v)=0$ for $k=1,\dots,r$. Hence, we see that the Hessian of $g_P$ is unbounded near $v$. On the other hand, the Hessian of $\psi$ is bounded on $P$. Hence, for any $s$, $H_{s}$ is positive definite for $x$ sufficiently close to $v$.
\end{proof}

\bigskip
In the next example, we see the positive-definite islands around the vertices of $P$ shrinking as $s$ becomes large and negative. Moreover, $U_{s}^{v,(n,0)}\cap U_{s}^{v^{\prime},(n,0)}=\varnothing$ for all $s<-2$ unless $v=v^{\prime}$.

\begin{example}
Consider $\mathbb{C}P^{1}\times\mathbb{C}P^{1}$ with moment polytope $P=[0,1]\times[0,1]$ and symplectic potential $g_{s}=\frac{1}{2}\sum_{j=1}^{2}(x_{j}\log x_{j}+(1-x_{j})\log(1-x_{j}))+s(2x_{1}^{2}+\,x_{2}^{2})/2.$ Then $s^{cvx}=1$ and $s^{neg}=2$. For all $s<-s^{neg}$, the island decomposition of $P$ is

\begin{pspicture*}(-7,-4.9)(7,3.5)
\psset{unit=1.3cm}
\psframe(-2,-2)(2,2)
\psline[linestyle=dashed](-2,-0.5)(2,-0.5)
\psline[linestyle=dashed](-2,0.5)(2,0.5)
\psline[linestyle=dotted](-0.8,-2)(-0.8,2)
\psline[linestyle=dotted](0.8,-2)(0.8,2)
\pscurve[linecolor=gray]{<->}(2.1,0.5)(2.7,0)(2.1,-0.5)
\pscurve[linecolor=gray]{<->}(-0.8,-2.1)(0,-2.7)(0.8,-2.1)
\rput[Bl](2.8,0){$\tfrac12 \pm\tfrac{1}{2|s|}\sqrt{|s|(1+|s|)}$}
\rput(0,-3.1){$\tfrac12 \pm\tfrac{1}{2|s|}\sqrt{|s|(2+|s|)}$}
\rput(0,0){$(-,-)$}
\rput(-1.4,0){$(+,-)$}
\rput(1.4,0){$(+,-)$}
\rput(0,-1.25){$(-,+)$}
\rput(0,1.25){$(-,+)$}
\rput(-1.4,1.25){$(+,+)$}
\rput(-1.4,-1.25){$(+,+)$}
\rput(1.4,1.25){$(+,+)$}
\rput(1.4,-1.25){$(+,+)$}
\end{pspicture*}
\end{example}

Next, we give an example with an infinite number of negative-definite islands for a fixed value of $s$ in any neighborhood of $x=1/2$, and a little thought shows that one can arrange a sequence of such ``bad'' points which converges to the boundary of $P$ and thus yields a countably infinite number of negative islands for a countably infinite number of values of $s$.

\begin{example}
Consider $\mathbb{C}P^{1}$ with moment polytope $P=[0,1]$ and symplectic potential $g_{s}=\frac{1}{2}(x\log x+(1-x)\log(1-x))+s\psi(x)$, where $\psi(x)$ is such that $\psi^{\prime\prime}(x)=2+e^{-1/(x-1/2)^{2}}\sin\left(  \frac{1}{x-1/2}\right)  $. One can verify that $\psi^{\prime \prime}(x)>1$ on $[0,1]$ and hence that it is strongly convex.

At $s=-1$, we have $g_{-1}^{\prime\prime}(1/2)=0$ and the oscillatory behavior of $\sin\left(  \frac{1}{x-1/2}\right)  $ shows that there are an infinite number of open intervals on which $g_{-1}^{\prime\prime}(x)>0$ interspersed by intervals on which $g_{-1}^{\prime\prime}(x)<0$, that is, there are an infinite number of positive and negative islands in any neighborhood of $x=1/2$ at $s=-1$. For any $s<-1,$ there are only a finite number of positive-definite and negative-definite islands on $P,$ but one can arrange $\psi$ to consist of a convergent sum of such oscillatory terms centered on a sequence, say $x_{n}=1/2^{n}$, of ``bad'' points. There will then be a decreasing sequence $s_{1}=-1>s_{2}>\cdots>s_{n}>\cdots$ such that $g_{s_{n}}^{\prime\prime}(x_{n})=0$ and at $s=s_{n}$ there are an infinite number of positive and negative islands in any neighborhood of $x_{n}$.

Note that if a point $x$ is in a negative-definite island at time $s$, it is in a negative-definite island for all smaller $s$. Thus, the negative islands do not ``move'', they just grow.
\end{example}

\bigskip
We now turn to the complex structure on $X$. We will denote the polarization $\Po_{g_s}$  by $\Po^s$; it is given in terms of the decomposition $T_{(x,\theta)}X\simeq T_{x}P\oplus T_{\theta}\mathbb{T}^{n}\simeq\mathbb{R}^{n} \oplus\mathbb{R}^{n}$ by the following lemma.

\begin{lemma}
\label{lemma:P}
For $x\in \check P$ and $\theta\in\T^n$, $\Po_{(x,\theta)}^{s}=\left\langle
\begin{pmatrix}
\mathbf{1}\\
-\ii H_{s}(x)
\end{pmatrix}
v:v\in\mathbb{R}^{n}\right\rangle .$
\end{lemma}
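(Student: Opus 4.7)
The statement is a pointwise identification of $\Po^s_{(x,\theta)} = T^{0,1}_{(x,\theta)}X_P$ in the real basis $(\partial/\partial x^1,\dots,\partial/\partial x^n,\partial/\partial\theta_1,\dots,\partial/\partial\theta_n)$ of $T_{(x,\theta)}X_P$, and it is essentially a one-line matrix calculation from \eqref{eqn:KahlerStruct}. My plan is to identify $\Po^s_{(x,\theta)}$ as an eigenspace of $I_{g_s}$: a vector $\bigl(\begin{smallmatrix}a\\ b\end{smallmatrix}\bigr)\in T^{\C}_{(x,\theta)}X_P$ (with $a,b\in\C^n$ the $\partial/\partial x$- and $\partial/\partial\theta$-components) lies in the antiholomorphic subspace iff $I_{g_s}\bigl(\begin{smallmatrix}a\\b\end{smallmatrix}\bigr) = \mp\ii\bigl(\begin{smallmatrix}a\\b\end{smallmatrix}\bigr)$. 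Using the block form $I_{g_s} = \bigl(\begin{smallmatrix}0 & -H_s^{-1}\\ H_s & 0\end{smallmatrix}\bigr)$ this immediately amounts to $b = \pm\ii H_s(x)\,a$; parametrizing $a$ by $v\in\R^n$ (the complex span being unchanged) then yields the claim, with the overall sign fixed by the convention in \eqref{eqn:KahlerStruct}.

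As an equivalent route, I would use the explicit holomorphic coordinates $z_j^{(s)} = y_j^{(s)} + \ii\theta_j$ coming from the Abreu--Guillemin formalism (Section \ref{sec:prelim}), with $y_j^{(s)} = \partial g_s/\partial x^j$. Then $\Po^s_{(x,\theta)}$ is the complex span of the vectors $\partial/\partial\bar z_j^{(s)} = \tfrac12(\partial/\partial y_j^{(s)} + \ii\,\partial/\partial\theta_j)$. Since the Jacobian of $x\mapsto y^{(s)}(x) = \nabla g_s(x)$ is the symmetric Hessian $H_s(x)$, the chain rule gives $\partial/\partial y_j^{(s)} = \sum_k (H_s^{-1}(x))_{jk}\,\partial/\partial x^k$ (valid where $\det H_s(x)\neq 0$). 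Assembling the $n$ generators as columns of a $(2n)\times n$ matrix in the stated basis produces $\tfrac12\bigl(\begin{smallmatrix}H_s^{-1}(x)\\ \ii\,\mathbf{1}\end{smallmatrix}\bigr)$; right-multiplying by $2H_s(x)$ (a complex change of basis in the $n$-plane) yields the form in the lemma.

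There is no genuine obstacle; the entire proof is bookkeeping. As a consistency check, I would note that the resulting $n$-plane is Lagrangian for $\omega = \sum_j dx^j\wedge d\theta_j$: for $v_1,v_2\in\R^n$,
\[
\omega\Bigl(\bigl(\begin{smallmatrix}v_1\\ -\ii H_s v_1\end{smallmatrix}\bigr),\bigl(\begin{smallmatrix}v_2\\ -\ii H_s v_2\end{smallmatrix}\bigr)\Bigr) = -\ii\bigl(v_1^{T} H_s v_2 - v_1^{T} H_s^{T} v_2\bigr) = 0
\]
by the symmetry of $H_s$, and the subspace is manifestly $n$-dimensional. By continuity in $x$, the formula extends from the open islands of Section \ref{sssik} where $\det H_s(x)\neq 0$ across the singular zero locus $Z_s$.
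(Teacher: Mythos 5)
Your first route is precisely the paper's proof: the paper computes $\Po^s = \ker(\mathbf{1}+\ii I_{g_s})$ directly from the block form (\ref{eqn:KahlerStruct}), which is the same eigenspace computation, and your Lagrangian sanity check and continuity extension across $Z_s$ are fine. One caution about the second route and the claim that ``the overall sign is fixed by the convention in (\ref{eqn:KahlerStruct})'': with $I_{g_s}$ as in (\ref{eqn:KahlerStruct}), the vectors $\partial/\partial\bar z_j^{(s)}=\tfrac12\bigl(\partial/\partial y_j^{(s)}+\ii\,\partial/\partial\theta_j\bigr)$ satisfy $I_{g_s}\,\partial/\partial\bar z_j^{(s)}=-\ii\,\partial/\partial\bar z_j^{(s)}$, and right-multiplying your column matrix $\tfrac12\bigl(\begin{smallmatrix}H_s^{-1}\\ \ii\mathbf{1}\end{smallmatrix}\bigr)$ by $2H_s$ produces $\bigl(\begin{smallmatrix}\mathbf{1}\\ +\ii H_s\end{smallmatrix}\bigr)$, the \emph{complex conjugate} of the lemma's $\bigl(\begin{smallmatrix}\mathbf{1}\\ -\ii H_s\end{smallmatrix}\bigr)$, which spans the $+\ii$-eigenspace of $I_{g_s}$. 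So your second route does not, as written, ``yield the form in the lemma,'' and (\ref{eqn:KahlerStruct}) actually resolves the $\pm$ ambiguity in the opposite direction from what you state. This discrepancy between $\operatorname{span}\{\partial/\partial\bar z_j\}$ and $\ker(\mathbf{1}+\ii I_{g_s})$ is already latent in the paper itself and is immaterial for the following lemma on $\overline{\Po^s}\cap\Po^s$ (which is conjugation-symmetric), but you should pick one convention and carry it through rather than asserting that the two routes agree.
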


\begin{proof}
The $(0,1)$-tangent bundle $\Po^s$ is the kernel of
\[
\mathbf{1}+\ii I_{g_s}=\begin{pmatrix}\mathbf{1}&-\ii H^{-1}_s\\ \ii H_s&\mathbf{1}\end{pmatrix},
\]
which is clearly equal to the claimed subspace.
\end{proof}

\bigskip
For $s\leq-s^{cvx}$, the Hessian $H_{s}(x)$ is no longer necessarily positive definite and hence no longer defines a positive complex structure at every point, but the polarization $\Po^s$ is nevertheless well defined (it is a smooth complex integrable Lagrangian distribution). For $x\in P$ such that $\det H_{s}(x)\neq0$, $\Po^s_{(x,\theta)}$ is the $(0,1)$-tangent space of a complex structure which is not necessarily positive with respect to $\omega$. At such points, the induced ``metric'' $\omega_{(x,\theta)}(\cdot,J_{(x,\theta)}\cdot)$ is an indefinite bilinear symmetric tensor whose signature is determined by the signature of $H_{s}(x)$ via (\ref{eqn:KahlerStruct}).

At a point $x_{0}\in P$ where $\det H_{s}(x_{0})=0$, the next lemma shows that the polarization $\Po_{(x_{0},\theta)}^{s}$ is no longer purely complex and therefore does not define even an indefinite metric; rather, the (possibly indefinite) metric $\omega(\cdot,J\cdot)$ becomes singular as $x$ approaches $x_{0}$ from an indefinite island.

\begin{lemma}
$\overline{\Po_{(x,\theta)}^s}\cap \Po_{(x,\theta)}^s=\left(\ker H(x)\right)^{\mathbb{C}}\oplus\{0\}.$
\end{lemma}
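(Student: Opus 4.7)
The plan is to compute both sides directly from the explicit description of $\Po^s_{(x,\theta)}$ furnished by Lemma \ref{lemma:P}.

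First, I would use Lemma \ref{lemma:P} to write
\[
\Po^s_{(x,\theta)} = \left\{ \begin{pmatrix} v \\ -\ii H_s(x) v \end{pmatrix} : v \in \mathbb{C}^n \right\},
\]
where one passes from the real spanning set to arbitrary complex $v$ by taking the complex span. Since $H_s(x)$ is a real matrix, complex conjugation gives
\[
\overline{\Po^s_{(x,\theta)}} = \left\{ \begin{pmatrix} w \\ \ii H_s(x) w \end{pmatrix} : w \in \mathbb{C}^n \right\}.
\]

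Next, I would characterize the intersection. An element lies in both spaces iff there exist $v, w \in \mathbb{C}^n$ with $v = w$ and $-\ii H_s(x) v = \ii H_s(x) w$. Substituting, this reduces to $H_s(x) v = 0$, i.e., $v \in (\ker H_s(x))^{\mathbb{C}}$, and in that case the second component is zero. Therefore
\[
\overline{\Po^s_{(x,\theta)}} \cap \Po^s_{(x,\theta)} = \left\{ \begin{pmatrix} v \\ 0 \end{pmatrix} : v \in (\ker H_s(x))^{\mathbb{C}} \right\} = (\ker H_s(x))^{\mathbb{C}} \oplus \{0\},
\]
which is the desired identification under the splitting $T_{(x,\theta)}X \simeq T_x P \oplus T_\theta \mathbb{T}^n$.

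There is no real obstacle here; the statement is essentially a linear-algebraic restatement of Lemma \ref{lemma:P}. The only thing to be careful about is the passage from the real span in the statement of Lemma \ref{lemma:P} to a complex span (which is automatic since the ambient space is the complexified tangent bundle), and the observation that $H_s(x)$ being real means $\overline{\Po^s}$ is obtained simply by flipping the sign of $\ii$.
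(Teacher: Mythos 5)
Your proof is correct and follows essentially the same route as the paper: use Lemma \ref{lemma:P} to describe $\Po^s_{(x,\theta)}$ explicitly, take the conjugate using that $H_s(x)$ is real, and compare components. You are, if anything, a bit more careful than the paper, which takes $v,w\in\mathbb{R}^n$ in the displayed inclusion argument; since $\Po^s$ is the \emph{complex} span, a general element has $v\in\mathbb{C}^n$, and your version handles this directly, so that the resulting intersection is manifestly the complexification $(\ker H_s(x))^{\mathbb{C}}\oplus\{0\}$ rather than just its real points.
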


\begin{proof}
By Lemma \ref{lemma:P}, if $v\in\ker H(x)$, then $\binom{\mathbf{1}}{-\ii H(x)}v=\binom{v}{0}\in\overline{\Po^{s}_{(x,\theta)}}\cap \Po^{s}_{(x,\theta)}$. To show the opposite inclusion, suppose $v,w\in\mathbb{R}^{n}$ are such that $\overline{\binom{\mathbf{1}}{-\ii H(x)}v}=\binom{\mathbf{1}}{-\ii H(x)}w\in \mathcal{P}^s_{(x,\theta)}$. Then $\binom{v}{\ii H(x)v}=\binom{w}{-\ii H(x)w}$. Comparing real parts shows $w=v$, and comparing imaginary parts shows that $H(x)v=0$, whence $\overline{\binom{\mathbf{1}}{-\ii H(x)}v}=\binom{\mathbf{1}}{-\ii H(x)}w=\binom{v}{0}\in\ker H(x)\oplus\{0\}$.
\end{proof}

\bigskip
We conclude this section with short a discussion of the metric behavior of $X_P$ for $s$ slightly smaller than $-s^{cvx}.$ Fix $x_{0}\in Z$ and let $v\in\ker H_{s}(x_{0}) $. Then for small enough $\varepsilon>0$, the path $x(t):=x_{0}+tv,~t\in(0,\varepsilon)$ is in some island $U$. Fix $\theta\in\mathbb{T}^{n}.$ As an element of the tangent space $T_{(x(t),\theta)}X$, the length of $v$ is $\gamma^{s}(v,v)=\,^{t}vH_{s}(x(t))v$ and hence goes to zero as $t\rightarrow0$. On the other hand, the length of the path $(x(t),\theta ),~t\in(0,\varepsilon)$ is $\int_{0}^{\varepsilon }\gamma_{(x(t),\theta)}^{s}(v,v)dt=\,^{t}v\left(  \int_{0}^{\varepsilon} H_{s}(x(t))dt\right)  v$, which is some (finite) nonzero but possibly negative constant, depending on the signature of $H_{s}$ on $U$.

For $x_{0},$ $v$ and $\varepsilon$ as above and fixed $t\in (0,\varepsilon) $, the  circle $\{(x(t),t^{\prime }v):t^{\prime}\in\mathbb{R}\}\subset\{x(t)\}\times\mathbb{T}^{n}$ has circumference $2\pi\,^{t}vH_{s}^{-1}(x(t))v.$ Expanding $v$ in terms of an orthonormal eigenbasis of $H(x(t))$ and using the fact that $v\in\ker H(x(0))$ shows that $2\pi\,^{t}vH_{s}^{-1}(x(t))v\rightarrow\pm\infty$ as $t\rightarrow0$. That is, the angular size of the circle over $x(t)=x_{0}+tv$ which is symplectically dual to~$v$ grows without bound as $x(t)$ approaches $x_{0}.$

Rubinstein and Zelditch encode this observation, which is already easy to see from (\ref{eqn:KahlerStruct}), in terms of the Monge--Amp\'{e}re measure as follows. For $g_{s}$ convex, the symplectic form $\omega=dx^{j}\wedge d\theta_{j}$ can be written in terms of the Legendre dual variable $y_{s}=\partial g_{s}/\partial_{x}$ as $H_{s}^{-1}dy_{s}^{j}\wedge d\theta_{j}$. The Liouville measure $(-1)^{n(n-1)/2}\omega^{n}/n!$ then becomes $MA(g_{s})\wedge d\theta_{1}\wedge\cdots\wedge d\theta_{n},$ where $MA(g_{s})$ is the \emph{Monge--Amp\'{e}re measure} $MA(g_{s}):=\det H_{s}^{-1} dy_{s}^{1}\wedge\cdots\wedge dy_{s}^{n}$ on $\mathbb{R}^{n}$ (see equation (12) \emph{ff.} in \cite{Rubinstein-Zelditch1}). Rubinstein and Zelditch show in \cite[Theorem 1]{Rubinstein-Zelditch2} that the Monge--Amp\`{e}re measure at the convex lifespan charges the singular set $Z_{-s^{cvx}}$ with positive mass:
\[
\int_{[0,-s^{cvx}]\times\mathbb{R}^{n}}ds\wedge MA(g_{s})=\int_{Z_{-s^{cvx}}}ds\wedge MA(g_{-s^{cvx}})>0.
\]
That is, as $s\rightarrow-s^{cvx}$, $MA(g_{s})$ converges to a positive current supported on the zero set of $H_{s}$.

\subsection{Polarized sections in the negative-definite island}
\label{ssqhs}

As always, we assume that $X$ is equipped with a symplectic potential $g_{s}=g_{P}+s\psi,$ where $\psi\in C^\infty_+(P)$, and we write $y_{s}:=\partial g_{s}/\partial x$ and denote the Hessian of $g_s$ at the point $x$ by $H_{s}(x)$. Recall from the last section that we decompose $P\setminus Z_{s}$, where $Z_{s}:=\{x\in
P:\det H_{s}(x)=0\}$, into a union of connected open sets
\[
P\setminus Z_{s}=\bigcup_{a+b=n} \bigcup_{j=1}^{n_s^{(a,b)}}U_{s,j}^{(a,b)},
\]
where $H_{s}(x)$ has constant signature $(a,b)$ on the island $U_{s,j}^{(a,b)}.$ Moreover, there is an $s^{neg}>0$ and a family of negative-definite islands $\{U_{s}^{(0,n)}:s\in(-s^{neg},\infty)\}$ which is increasing and fills $\check{P}$ as $s\rightarrow-\infty$.

For $s>-s^{cvx}$, the symplectic potential $g_{s}$ induces a positive K\"{a}hler structure on all of $X_P$, and the (geometric) quantum Hilbert space---defined to be the space of holomorphic sections of a prequantum line bundle $L$ over $X_P$---is spanned by the monomials $\sigma_{s}^{m}$ associated to integer points~$m\in P\cap\mathbb{Z}^{n}$. (In this section, we do not incorporate the half-form correction, so $L$ is a line bundle with Chern class $[\omega/2\pi].$) On the open orbit with respect to a unit-norm section $\mathbbm{1}^{U(1)}\in\Gamma_{\check X_P}(L)$, the section $\sigma_{s}^{m}$ can be written $w_{s}^{m}e^{-(xy_{s}-g_{s})} \mathbbm{1}^{U(1)}$, where $w_{s}=e^{y_{s}+\ii \theta}$.

Our first result is that even for $s<-s^{cvx}$, the vector space of $\Po^{s}$-polarized sections of $L$ is spanned by the monomials $\sigma_{s}^{m},~m\in P\cap\mathbb{Z}^{n}$.

\begin{theorem}
The dimension $\dim\ker\left.  \bar{\partial}_{s}\right\vert _{\Gamma(L)}$ is equal to $\#\left(  \mathbb{Z}^{n}\cap P\right)$. In particular, the set $\{\sigma^m: m\in P\cap\mathbb{Z}^n\}$ is a basis for $\ker\left.  \bar{\partial}_{s}\right\vert _{\Gamma(L)}$, where
\begin{equation}
\label{eqn:monomials}
\left.  \sigma^{m}\right\vert _{\check X_P}=e^{m\cdot(y+\ii\theta)}e^{-(x\cdot y-g)}\mathbbm{1}^{U(1)}.
\end{equation}

\end{theorem}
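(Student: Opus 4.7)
The strategy is to exploit the $\T^n$-equivariance of the setup, reducing the computation of $\ker\bar\partial_s$ to a weight-by-weight analysis. Since $\omega$, $g_s=g_P+s\psi$, and $\nabla$ are all $\T^n$-invariant, the polarization $\Po^s$ and hence the operator $\bar\partial_s$ commute with the $\T^n$-action on $\Gamma(L)$. The Peter--Weyl theorem for the torus then splits $\Gamma(L)$ (and its kernel under $\bar\partial_s$) as a $C^\infty$-convergent direct sum of weight spaces indexed by $m\in\Z^n$. I would show that the $m$-th weight component of $\ker\bar\partial_s$ is at most one-dimensional, spanned by $\sigma^m$ precisely when $m\in P\cap\Z^n$.

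On the open dense orbit $\check X_P\cong\check P\times\T^n$, any smooth weight-$m$ section of $L$ has the form $f(x)\,e^{\ii m\cdot\theta}\,\mathbbm{1}^{U(1)}$. Using the pointwise basis $\partial/\partial\bar z_j^s=\tfrac12(H_s^{-1})_{jk}\,\partial/\partial x^k+\tfrac{\ii}{2}\,\partial/\partial\theta_j$ of $\Po^s$, valid on the open set where $\det H_s\neq 0$, a direct computation reduces the polarization equation to the linear system
\[
\frac{\partial f}{\partial x^k}=(H_s)_{kj}(m_j-x^j)\,f,
\]
whose right-hand side equals $f\cdot\partial[m\cdot y_s-(x\cdot y_s-g_s)]/\partial x^k$. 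The unique solution up to a multiplicative constant is therefore $f=C\,e^{m\cdot y_s-(x\cdot y_s-g_s)}$, which is exactly $\sigma^m|_{\check X_P}$ as defined in \eqref{eqn:monomials}. Smoothness of $\nabla$ and of $\sigma^m$ across the nowhere-dense singular locus $Z_s$ upgrades this to all of $\check X_P$.

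It remains to characterize those $m$ for which $\sigma^m$ extends smoothly across the toric divisors. Writing $g_s=g_P+s\psi$ with $\psi\in C^\infty(P)$ gives $y_s=y_P+s\,\partial\psi/\partial x$ and $h_s=h_P+s(x\cdot\partial\psi/\partial x-\psi)$, which yields the clean factorization
\[
\sigma^m=\sigma^m_{g_P}\cdot\mu^*\!\exp\bigl(s[(m-x)\cdot\nabla\psi+\psi]\bigr).
\]
The second factor is the moment-map pullback of a smooth, nowhere-vanishing function on $P$, hence is smooth and nonvanishing on all of $X_P$ for every $s$, irrespective of the signature of $H_s$. The first factor is the classical monomial section for the positive reference K\"ahler structure $I_{g_P}$, which by the Delzant correspondence recalled in \eqref{hzero} extends smoothly to $X_P$ if and only if $m\in P\cap\Z^n$. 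The same characterization therefore transfers to $\sigma^m$.

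To conclude, linear independence of $\{\sigma^m:m\in P\cap\Z^n\}$ is immediate from the distinctness of their $\T^n$-weights. For spanning, any smooth $\tau\in\ker\bar\partial_s$ decomposes into smooth polarized weight summands via the $\T^n$-Fourier series; by the previous paragraphs each nonzero summand is a scalar multiple of $\sigma^m$ for some $m\in P\cap\Z^n$, so the sum is finite. The step I expect to require the most care is the third one: although $\Po^s$ fails to be purely complex on $Z_s$ and the induced metric fails to be Riemannian on indefinite islands, the entire $s$-dependence of $\sigma^m$ is carried by the scalar factor $\mu^*\exp(s[(m-x)\cdot\nabla\psi+\psi])$, which decouples the smooth-extension question from the indefinite K\"ahler geometry on $X_P$ and reduces it to the classical positive case.
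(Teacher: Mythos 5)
Your proof is correct and takes a genuinely different route from the paper's. The paper attacks the singular locus $Z_s$ head-on: on $Z_s\cap\check P$ it rescales the $(0,1)$ vector fields by $H_s$ (replacing $(1+\ii J)\partial_{\theta_j}$ by $H_{jk}(1+\ii J)\partial_{\theta_k}$, which stays smooth where $\det H_s=0$), and near the boundary it separates transverse and longitudinal directions and repeats the rescaling there. Your approach instead reduces the entire question to the classical positive-definite case via the factorization $\sigma^m_s=\sigma^m_{g_P}\cdot\mu^*\!\exp\bigl(s[(m-x)\cdot\nabla\psi+\psi]\bigr)$, whose second factor is a smooth nowhere-vanishing function on $X_P$ independent of the signature of $H_s$. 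This is a cleaner and more structural argument: all the $s$-dependence, including the indefinite geometry, is isolated in a harmless scalar factor, and the extension/non-extension of $\sigma^m$ across the toric divisors is literally identical to the positive case. The Peter--Weyl reduction at the start also makes explicit the weight-space decomposition that the paper leaves implicit in the phrase ``a basis of the infinite-dimensional space of solutions.'' What the paper's rescaling argument buys in exchange is a direct verification that the polarization equations themselves remain solvable across $Z_s$, rather than relying on smooth continuation.

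One place where your writeup is slightly terse is the sentence ``Smoothness of $\nabla$ and of $\sigma^m$ across the nowhere-dense singular locus $Z_s$ upgrades this to all of $\check X_P$.'' The cleanest justification is to observe that the ODE you derive,
\[
\frac{\partial f}{\partial x^k}=(H_s)_{kj}(m_j-x^j)\,f,
\]
makes sense and holds on \emph{all} of $\check P$ (it does not involve $H_s^{-1}$), is equivalent on $\check P\setminus Z_s$ to the polarization conditions, and has the unique up-to-scale solution $f=Ce^{m\cdot y_s-h_s}$ on the connected set $\check P$. This dispenses with any worry about $\check P\setminus Z_s$ being disconnected and matching constants across components, and it is essentially the same observation the paper makes when it multiplies by $H_{jk}$ to kill the $H^{-1}$ in the $(0,1)$ vector fields. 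With that small clarification, your argument is complete.
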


\begin{proof}
In the dense open subset $\check X_P\setminus Z_s$, solutions of $\bar\partial_s \sigma=0$ satisfy Cauchy--Riemann equations and a basis of the infinite-dimensional space of solutions is given by the monomial sections (\ref{eqn:monomials}). Since there is a neighborhood of each vertex which is biholomorphic to a neighborhood of $0$ in $\mathbb{C}^n$, the problem of finding those local solutions which extend to $X_P$ becomes equivalent to the standard construction of holomorphic sections on toric varieties (see (\ref{hzero}) \textit{ff.}). For a point in $Z_s\cap\check P$, certain components of $J$ blow up; for example, $(1+iJ)\partial_{\theta_j} =\partial_{\theta_j}-\ii\left(H^{-1}\right)^{jk}\partial_{x^k}$. But since $H$ is invertible away from the zero locus of $\det H$, a section $u$ satisfies $\nabla_{(1+\ii J)\partial_{\theta_j}}u=0$ for all $j=1,\dots,n$ if and only if it satisfies $\nabla_{(1+\ii J)H_{jk}\partial_{\theta_k}}u$ for all $j$, and the vector field $(1+\ii J)H_{jk}\partial_{\theta_k} =H_{jk}\partial_{\theta_k}-\ii H_{jk}\left(H^{-1}\right)^{kl}\partial_{x^l} =H_{jk}\partial_{\theta_k}-\ii\partial_{x^l}$ is well behaved near $\{\det H=0\}$. For a point in the intersection of the zero locus $Z_s$ with the relative interior of a codimension $p$ face of $P$, the kernel of $\bar\partial_s$ is the intersection of the kernel of $\bar\partial_s$ in the directions transverse to the inverse image of the face with respect to $\mu$ and the kernel in the longitudinal directions. In the transverse directions, one has the usual Cauchy--Riemann conditions. In the longitudinal directions, one may have to rescale some of the $(0,1)$ vector fields as above.
\end{proof}

\bigskip
Next, we examine how $\Po^{s}$-polarized sections behave in indefinite regions in $X_P$. Our first result, Theorem \ref{thm:ptwiseNorm}, shows that for a polarized section associated to an integral point in the polytope which lies in the negative-definite region, the pointwise norm of the section increases monotonically as one moves from the integral point toward the boundary of the negative-definite region. Theorem \ref{thm1} then shows that for large negative $s$, the $L^2$-norm of a polarized section is concentrated outside of any compact set in the negative-definite region.

\begin{theorem}
\label{thm:ptwiseNorm}
Let $s<-s^{neg}$ and suppose $m\in P\cap\mathbb{Z}^n$ is in an island in $P^{(0,n)}_s$. Then the pointwise norm of $\sigma_{s}^{m}$ (which is $\T^n$-invariant) is minimized at $m$ and strictly increasing on each ray emanating from $m$ and ending on the boundary of the negative-definite island containing $m$.
\end{theorem}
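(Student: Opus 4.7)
The plan is to work directly with the explicit formula for $\sigma_s^m$ on the open orbit. Using the trivialization $\mathbbm{1}^{U(1)}$ and the expression $\sigma_s^m = w_s^m e^{-h_s} \mathbbm{1}^{U(1)}$ with $h_s = x\cdot y_s - g_s$ and $w_s = e^{y_s + \ii \theta}$, a direct computation gives
\[
|\sigma_s^m|^2 = e^{2 m\cdot y_s(x)} e^{-2(x\cdot y_s(x) - g_s(x))} = e^{2 F_s^m(x)},
\]
where $F_s^m(x) := g_s(x) - (x-m)\cdot y_s(x)$. In particular $|\sigma_s^m|^2$ depends only on $x$, recovering the $\mathbb{T}^n$-invariance. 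The strategy is thus to study $F_s^m$ as a real-valued function on the negative-definite island containing $m$.

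Next I would compute the gradient of $F_s^m$. Using $y_s = \partial g_s/\partial x$ and the symmetry of $H_s$, one obtains
\[
\nabla F_s^m(x) = -H_s(x)\,(x-m).
\]
On the negative-definite island, $H_s(x)$ is invertible, so the unique critical point of $F_s^m$ in the island is $x = m$. Moreover, the Hessian of $F_s^m$ at $x=m$ equals $-H_s(m)$, which is positive definite since $H_s(m)$ is negative definite; hence $m$ is a strict local minimum of $F_s^m$ and therefore of $|\sigma_s^m|^2$.

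To establish monotonicity along rays from $m$, I parametrize such a ray by $x(t) := m + t v$ for a fixed unit vector $v \in \mathbb{R}^n$ and $t \in [0, T]$, where $T>0$ is the first time the ray hits the boundary of the negative-definite island. Using the gradient formula,
\[
\frac{d}{dt} F_s^m(x(t)) = \nabla F_s^m(x(t)) \cdot v = -t\, v^{T} H_s(x(t))\, v.
\]
For every $t \in (0, T)$ the point $x(t)$ lies in the negative-definite island, so $v^{T} H_s(x(t)) v < 0$, which yields $\tfrac{d}{dt} F_s^m(x(t)) > 0$ for $t \in (0, T)$. Since $|\sigma_s^m|^2 = e^{2 F_s^m}$ is a strictly increasing function of $F_s^m$, the pointwise norm is strictly increasing along the ray and minimized at $t=0$, i.e.\ at the integral point $m$.

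There is no real obstacle in the interior of the island: the whole argument reduces to a one-line gradient computation and the definiteness of $H_s$. The only point worth care is verifying that the statement is formulated on the open orbit where the action-angle description is valid, but this is harmless since the rays from $m$ and the entire negative-definite island lie inside $\check{P}$ by hypothesis, so all expressions above are smooth throughout the region of interest.
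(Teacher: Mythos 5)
Your proof is correct and follows essentially the same approach as the paper: both compute the gradient of the (log of the) pointwise norm, observe that it equals $-H_s(x)(x-m)$ up to positive factors, and conclude monotonicity along rays from $m$ using the negative definiteness of $H_s$ on the island; the only cosmetic difference is that you differentiate $F_s^m$ along the ray while the paper integrates the gradient from $m$ to $x$.
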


\begin{proof}
We have from (\ref{hilbertgs}) that $\left\vert \sigma_{s}^{m}\right\vert ^{2}=e^{2(m-x)\cdot y_{s}+2g_{s}}$ whence
\[
\operatorname{grad}\left\vert \sigma_{s}^{m}\right\vert ^{2}=e^{2(m-x)\cdot y_{s}+2g_{s}}2(m-x)\cdot H_{g_{s}}.
\]
Integrating this along the path $\gamma(t)=m+t(x-m),$ $t\in[0,1]$ yields that the gradient of the norm of the
holomorphic section $\sigma_{s}^{m}$ is
\[
\left| \sigma_{s}^{m}\right| ^{2}(x)=\left|\sigma_{s}^{m}\right|^{2}(m)-\int_{0}^{1}e^{2t(m-x)\cdot y_{s}(\gamma(t))+2g_{s}(\gamma
(t))}2t(m-x)\cdot H_{s}(\gamma_{t})\cdot(m-x)dt.
\]
The result then follows since $H_s$ is negative definite as long as $\gamma_{t}$ is in the negative-definite island.
\end{proof}

\bigskip
Let $\phi(x):=(x-m)\cdot\frac{\partial \psi}{\partial x}-\psi$.

\begin{lemma}
Fix $m\in P\cap\mathbb{Z}^n$ and $s_0<-s^{cvx}$. Then for each $s<s_0$ and each compact set $K\subset U_s^{(0,n)}$, we have
\begin{enumerate}
\item $|\sigma^m_s|(x,\theta)=e^{(m-x)\cdot y_0 +g_0} e^{-|s|\phi(x)},$ and
\item the function $\phi(x)$ achieves its maximum on $\partial K$.
\end{enumerate}
\end{lemma}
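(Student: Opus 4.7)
The plan is a direct computation for part~(1) and a convexity argument for part~(2).

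For part~(1), I would start from the formula $|\sigma_s^m|^2 = e^{2(m-x)\cdot y_s + 2 g_s}$ already used in the proof of Theorem~\ref{thm:ptwiseNorm} (and which follows from (\ref{hilbertgs}) together with $|w_s^m|=e^{m\cdot y_s}$ and $h_s = x\cdot y_s - g_s$), so that $|\sigma_s^m|(x,\theta) = e^{(m-x)\cdot y_s + g_s(x)}$ is manifestly $\T^n$-invariant. I would then substitute $g_s = g_0 + s\psi$ and correspondingly $y_s = y_0 + s\,\partial\psi/\partial x$, which splits the exponent as
\[
(m-x)\cdot y_0 + g_0 \;+\; s\bigl[(m-x)\cdot\tfrac{\partial\psi}{\partial x} + \psi\bigr].
\]
The bracket is exactly $-\phi(x)$ by definition, and using $s = -|s|$ (since $s<s_0<-s^{cvx}<0$) yields the claimed factorization.

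For part~(2), I would treat $\phi$ as the Legendre-dual type object associated with the strongly convex $\psi$. A direct differentiation gives
\[
\nabla\phi(x) = H_\psi(x)\,(x-m),
\]
and since $\psi$ is strongly convex, $H_\psi$ is strictly positive definite everywhere on $P$, so $\nabla\phi$ vanishes only at $x=m$. Moreover, the standard supporting-hyperplane inequality for strictly convex $\psi$,
\[
\psi(m) > \psi(x) + (m-x)\cdot \tfrac{\partial\psi}{\partial x}(x), \qquad x\neq m,
\]
rearranges to $\phi(x) > \phi(m) = -\psi(m)$, so $m$ is the unique global minimum of $\phi$ on $P$. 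In particular $\phi$ has no interior local maximum on $\check P$, so by the extreme value theorem its maximum on any compact $K\subset\check P$ (whether or not $K$ contains $m$) is attained on $\partial K$.

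There is no real obstacle here; the lemma is a bookkeeping step that isolates the $s$-dependent factor of $|\sigma_s^m|$ and records that the weight $\phi$ is a nice convex potential peaked at the boundary. The only point requiring minor care is the sign convention when converting between $s$ and $|s|$ in part~(1), since $s<0$ throughout this section.
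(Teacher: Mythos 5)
Your overall strategy is the right one and matches what the paper implicitly does: the paper's proof of part~(1) just cites equation~(\ref{hilbertgs}), and your direct substitution of $g_s = g_0 + s\psi$, $y_s = y_0 + s\,\partial\psi/\partial x$ into $|\sigma_s^m| = e^{(m-x)\cdot y_s + g_s}$ supplies exactly the missing computation. For part~(2) the paper points to the gradient argument of Theorem~\ref{thm:ptwiseNorm}; your critical-point argument is equivalent and complete: $\nabla\phi = H_\psi\cdot(x-m)$, strong convexity of $\psi$ makes this nonzero off $x=m$ and makes $m$ the unique strict global minimum of $\phi$, so on any compact $K\subset\check P$ the maximum of $\phi$ must lie on $\partial K$.

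However, you should not have concluded that your computation in part~(1) ``yields the claimed factorization.'' Following your own steps, the exponent is
\[
(m-x)\cdot y_0 + g_0 + s\bigl[(m-x)\cdot\tfrac{\partial\psi}{\partial x}+\psi\bigr]
= (m-x)\cdot y_0 + g_0 - s\,\phi(x) = (m-x)\cdot y_0 + g_0 + |s|\,\phi(x),
\]
so that $|\sigma_s^m| = e^{(m-x)\cdot y_0 + g_0}\,e^{+|s|\phi(x)}$, with the \emph{opposite} sign on the $\phi$-factor from the one printed in the lemma. The printed sign is in fact a typo: the proof of Theorem~\ref{thm1} uses $\int_K e^{2|s|\phi}\,e^{2(m-x)\cdot y_0 - 2g_0}\,d^nx$ and needs $e^{+2|s|\phi}$ so that Laplace's method localizes the mass at the $\partial K$-maximum of $\phi$; with $e^{-2|s|\phi}$ the integral would instead concentrate at the interior minimum $m$ and the concentration-outside-$K$ phenomenon the theorem asserts would not follow. (There is also a sign discrepancy in the $s$-independent prefactor $e^{\pm g_0}$ between the lemma and that display, which is immaterial for the asymptotics.) So your algebra is correct and in fact exposes a sign error in the lemma as stated; you should have flagged the mismatch rather than asserting agreement.
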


\begin{proof}$\quad$

\emph{1.)} follows from equation (\ref{hilbertgs}).

\emph{2.)} follows from an argument similar to that of Theorem \ref{thm:ptwiseNorm} (see also the proof of \cite[Lemma 3.7]{BFMN}, in particular the first two equations on p. 431).
\end{proof}

\begin{theorem}
\label{thm1}
For each $s_{0}<-s^{cvx}$ and for each compact set $K\subset U_{s_0}^{(0,n)}$ with smooth boundary $\partial K$ such that $\phi$ achieves a unique nondegenerate maximum on $\partial K$, there exist $\alpha,C>0$ such that for any $s\ll s_0$ and for any section $u\in H^0_s(X_{P},L)$,
\begin{equation}
\left\Vert u\right\Vert _{L^{2}(K,L)}^{2} < C e^{-|s|\alpha} \|u\|^2_{L^2(X_{P},L)}.
\label{eqn:1}
\end{equation}
\end{theorem}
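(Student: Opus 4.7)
The plan is to exploit the $\T^n$-orthogonal monomial basis $\{\sigma^m_s\}_{m\in P\cap\Z^n}$ of $H^0_s(X_P,L)$ to reduce (\ref{eqn:1}) to a uniform-in-$m$ comparison of the $L^2$-masses of each monomial on $K$ versus on $X_P$. Because $K$ pulls back to a $\T^n$-invariant compact subset of $X_P$ and the monomials carry distinct $\T^n$-weights, they are mutually orthogonal both in $L^2(K,L)$ and in $L^2(X_P,L)$. Writing $u=\sum_m c_m\sigma^m_s$ therefore diagonalizes both norms, and the theorem reduces to finding constants $C,\alpha>0$, independent of $m\in P\cap\Z^n$ and of $s\ll s_0$, with
\[
\|\sigma^m_s\|^2_{L^2(K,L)} \leq C\, e^{-|s|\alpha}\, \|\sigma^m_s\|^2_{L^2(X_P,L)}.
\]

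For each fixed $m$, the preceding lemma expresses the pointwise norm in the form $|\sigma^m_s|^2(x)=A_m(x)\,e^{2|s|\Phi_m(x)}$, where $A_m$ is an $s$-independent, positive continuous factor on $\check P$ and $\Phi_m$ is a smooth function (either $\phi$ or $-\phi$ up to the sign convention implicit in the lemma and fixed by Theorem \ref{thm:ptwiseNorm}) whose restriction to $\bar K$ attains its maximum $M_m$ at the unique nondegenerate point $x^*_m\in\partial K$. A crude $L^\infty$-bound on $K$ then yields
\[
\|\sigma^m_s\|^2_{L^2(K,L)} \leq (2\pi)^n \|A_m\|_{L^\infty(K)}\,\mathrm{Vol}(K)\, e^{2|s|M_m}.
\]
For a matching lower bound on $\|\sigma^m_s\|^2_{L^2(X_P,L)}$, I would use the nondegeneracy at $x^*_m$: because $\Phi_m|_{\bar K}$ has a nondegenerate maximum at the boundary point $x^*_m\in\partial K$, the outward normal derivative of $\Phi_m$ at $x^*_m$ must be strictly positive (otherwise $x^*_m$ would be a critical point of $\Phi_m$ in $\R^n$, contradicting nondegeneracy), and hence $\Phi_m$ continues to grow strictly past $\partial K$. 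This produces an open set $V_m\subset X_P\setminus K$ and a constant $\alpha_m>0$ with $\Phi_m\geq M_m+\alpha_m$ on $V_m$, and integration gives
\[
\|\sigma^m_s\|^2_{L^2(X_P,L)} \geq (2\pi)^n\inf_{V_m}A_m\,\mathrm{Vol}(V_m)\,e^{2|s|(M_m+\alpha_m)}.
\]
Dividing cancels $e^{2|s|M_m}$ and leaves exponential decay $e^{-2|s|\alpha_m}$ with an $s$-independent prefactor. Since $P\cap\Z^n$ is finite, setting $\alpha:=\min_m 2\alpha_m>0$ and $C:=\max_m \|A_m\|_{L^\infty(K)}\mathrm{Vol}(K)/(\inf_{V_m}A_m\,\mathrm{Vol}(V_m))$ yields the claimed uniform bound.

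The main obstacle is producing the strict outward gap $\alpha_m$: one must show that on a set of positive measure outside $K$, the function $\Phi_m$ genuinely exceeds its maximum over $K$ by a definite amount. This is precisely where the nondegeneracy hypothesis is essential; without it the outward normal derivative at $x^*_m$ could vanish and $\Phi_m$ might merely touch $M_m$ on a thin set. For $m\in K$, the monotone outward growth of $|\sigma^m_s|^2$ from $m$ (Theorem \ref{thm:ptwiseNorm}) together with nondegeneracy at $x^*_m$ immediately furnishes the gap. For $m\notin K$, the same monotone growth along rays emanating from $m$ forces the maximum of $\Phi_m|_{\bar K}$ to occur on the far side of $K$ from $m$, and nondegeneracy again produces an open neighborhood beyond $\partial K$ on which $\Phi_m>M_m+\alpha_m$. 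Everything else is a routine pointwise-to-$L^2$ estimate, with uniformity in $m$ coming essentially for free from the finiteness of $P\cap\Z^n$.
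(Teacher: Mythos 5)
Your argument is correct, and it takes a genuinely different route from the paper's. Both proofs share the same scaffolding: use the $\T^n$-orthogonality of the monomial basis together with the finiteness of $P\cap\Z^n$ to reduce to the single-monomial estimate. Where you diverge is in how that estimate is obtained. The paper applies Laplace's method (following Bleistein--Handelsman) to $\|\sigma^m_s\|^2_{L^2(K,L)}$ and to $\|\sigma^m_s\|^2_{L^2(K',L)}$ for a slightly enlarged compact $K'\supset K$ with $K'\subset U^{(0,n)}_s$, obtaining precise asymptotics $\sim C|s|^{-(n+\frac12)}e^{2|s|\phi(\cdot)}$ in each case; the exponential gap $\alpha=\phi(x'_0)-\phi(x_0)$ comes from the monotone outward growth of $\phi$, and then $\|\sigma^m_s\|^2_{L^2(K',L)}$ is bounded above by $\|\sigma^m_s\|^2_{L^2(X_P,L)}$. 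You instead bound $\|\sigma^m_s\|^2_{L^2(K,L)}$ by the crude $L^\infty$ estimate $\|A_m\|_{L^\infty(K)}\mathrm{Vol}(K)\,e^{2|s|M_m}$ and bound $\|\sigma^m_s\|^2_{L^2(X_P,L)}$ from below by integrating over a fixed open set $V_m$ just outside $\partial K$ on which $\phi\geq M_m+\alpha_m$. Your approach is more elementary---it avoids the sharp Laplace asymptotics and, importantly, avoids the paper's implicit claim that the hypotheses (unique nondegenerate boundary maximum) persist for some $K'\supset K$; it also does not actually need the nondegeneracy of the Hessian of $\phi|_{\partial K}$, only the strict outward growth. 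The paper's approach gives sharper asymptotics, which would matter if one wanted the optimal $\alpha$.

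One step in your write-up is slightly misjustified, though the conclusion is correct. You argue that the outward normal derivative of $\phi$ at $x^*_m$ must be positive, since otherwise $x^*_m$ would be a critical point of $\phi$ in $\R^n$, ``contradicting nondegeneracy.'' But nondegeneracy of the constrained maximum on $\partial K$ does not, by itself, forbid $x^*_m$ from being a free critical point (a nondegenerate interior maximum of $\phi$ in $\R^n$ that happens to lie on $\partial K$ is consistent with a nondegenerate restricted maximum). The fact you need is structural: by the Lemma preceding Theorem \ref{thm1}, $\nabla\phi(x)=H_{\psi}(x)(x-m)$ where $\psi$ is \emph{strongly} convex, so $\nabla\phi(x^*_m)\neq 0$ whenever $x^*_m\neq m$; and $x^*_m\neq m$ because $\phi$ increases strictly along every ray from $m$, which (for a compact $K$ with smooth boundary) forces the maximum of $\phi|_{\bar K}$ to be attained away from $m$. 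Once $\nabla\phi(x^*_m)\neq 0$ is in hand, the fact that it points along the outward normal follows from $x^*_m$ being a maximum of $\phi|_{\bar K}$, and the rest of your gap-and-neighborhood argument goes through. You should also resolve the sign ambiguity you flag: the correct relation is $|\sigma^m_s|^2(x)=A_m(x)e^{+2|s|\phi(x)}$ for $s<0$ (the Lemma as printed in the paper has a sign typo, but the displayed integral in the paper's proof uses the correct sign), so $\Phi_m=\phi$ and your use of the theorem's hypothesis on the maximum of $\phi$ is the right one.
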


\begin{remarks}
\begin{enumerate}
\item The constants $C$ and $\alpha$ depend on $s_0,\ K,\ \psi$ and $X_{P}$, but \emph{not} on $s$ or $u$.
\item The theorem holds in much more generality, but we prove only the simplest case here. Indeed, if $\phi$ achieves a degenerate minimum on $\partial K$, or if $K$ is not smooth at the minimum (for example, if the minimum occurs on a corner), then one actually obtains a larger constant $\alpha.$ (See, for example, \cite[Sec. 8.3, 8.4 after (8.4.51)]{BH}.)
\end{enumerate}
\end{remarks}

\begin{proof}
Fix $m\in P\cap\mathbb{Z}^n.$ By the lemma, for $s<0$
\[
\|\sigma^m_s\|^2_{L^2(K,L)} = (2\pi)^n \int_{K} e^{2|s|((x-m)\frac{\partial\psi}{\partial x}-\psi)}e^{2(m-x)y_0 - 2g_0}\,d^nx,
\]
which can be estimated for large $|s|$ by Laplace's method. In particular, for $s$ large and negative, the asymptotics are determined by the maximum of $(x-m)\frac{\partial\psi}{\partial x}-\psi$ on $K$. By assumption, suppose this unique maximum occurs at the point $x_0\in\partial K.$ Laplace's method then tells us that there exists $C_{K,m}>0$ such that for $s\rightarrow -\infty,$
\[
\|\sigma^m_s\|^2_{L^2(K,L)} \sim C_{K,m} |s|^{-\left(n+\tfrac12\right)}e^{2|s|\phi(x_0)}
\]
where $\phi(x_0)=(m-x_0)\frac{\partial\psi}{\partial x}(x_0)-\psi(x_0),$ and $C_{K,m}$ is an $s$-independent constant\footnote{In fact, $C_{K,m}=(2\pi)^{\frac{2n-1}{2}} \exp(2(m-x_0)y_0(x_0) - 2g_0(x_0)) / |J(x_0)|^{\frac12},$ where $J$ is a certain function depending on $\psi$, $g_0$ and a choice of parameterization of $\partial K$ near $x_0.$ See \cite[Sec. 8.3]{BH} for details.} \cite[eqn. 8.3.64]{BH}.

Consider now a slightly larger $K'\supset K$ still contained in $U^{(0,n)}_s$ (sufficiently close to $K$ such that our hypothesis hold also for $K'$). Since $\phi$ is increasing along the ray from $m$ through $x_0$, the unique maximum of $\phi$ on $\partial K'$, which occurs at some point $x'_0\in \partial K'$, is strictly larger than $\phi(x_0).$ Hence, we see that there exists constants $C:=C_{K,m}/C_{K',m}>0$ and $\alpha := \phi(x'_0) - \phi(x_0)$ such that for $s\rightarrow -\infty$,
\[
\frac{\|\sigma^m_s\|_{L^2(K,L)}^2}{\|\sigma^m_s\|_{L^2(K',L)}^2} \sim C e^{-|s|\alpha}
\]

Since $\|\sigma^m_s\|^2_{L^2(X_{P},L)} > \|\sigma^m_s\|^2_{L^2(K',L)}$, the above estimate holds if we replace $K'$ by $X$ (with a possibly larger $\alpha$ and different $C>0$). In particular, if we denote the $L^2$-normalized section $\sigma^m_s/\|\sigma^m_s\|_{L^2(X_P,L)}$ by $\hat{\sigma}^m_s$, then we have shown that there exist constants $C_{K,m},\alpha_{K,m}>0$ such that
\begin{equation}
\label{eqn:asymp2}
\|\hat{\sigma}^m_s\|^2_{L^2(K,L)} \sim C_{K,m}e^{-|s|\alpha_{K,m}}
\end{equation}
as $s\rightarrow -\infty.$

Finally, since the sections $\hat\sigma^m_s,\ m\in P\cap\mathbb{Z}^n$ form an orthonormal basis for $H^0_s(X_{P},L)$, there exists coefficients $d_m\in\mathbb{C},\ m\in P\cap\mathbb{Z}^n$ such that $u=\sum_m d_m\hat\sigma^m_s$ and $\|u\|^2_{L^2(X_{P},L)}=\sum_m |d_m|^2.$ Let $m_0$ be such that $\alpha:=\alpha_{K,m_0}=\min\{\alpha_{K,m}:m\in P\cap\mathbb{Z}^n\}$ and $m_1$ be such that $C := C_{K,m_1} = \max\{C_{m,K}:m\in P\cap\mathbb{Z}^n\}.$ The $L^2$-norm of $u$ restricted to $K$ is
\begin{align*}
\|u\|^2_{L^2(K,L)} &= \sum_{m,m'} \bar d_m d_{m'}\int_{K\times\mathbb{T}^n}\overline{\hat\sigma^m_s}\hat\sigma^{m'}_s d^n\theta\,d^nx \\
&= (2\pi)^n \sum_m |d_m|^2 \int_{K} |\hat\sigma^m_s|^2\,d^nx\\
&= \sum_m |d_m|^2 \|\hat{\sigma}^m_s\|^2_{L^2(K,L)}\\
&<  C_{K,m_0} e^{-|s|\alpha_{K,m_0}} \frac{C_{K,m_1}}{C_{K,m_0}} \|u\|^2_{L^2(X_{P},L)}
\end{align*}
where the last inequality follows from \ref{eqn:asymp2} and the fact that we take $s$ sufficiently negative.
\end{proof}

\begin{remark}
One has $H_{s}^{0}(X_{P},L):=\ker\left.  \bar{\partial}_{s}\right\vert_{\Gamma(L)}\subset L^{2}(X_{P},L)$, which means in particular that norm on the right-hand side of (\ref{eqn:1}) is finite.
\end{remark}

\noindent\textbf{Acknowledgments}

We would like to thank G. Marinescu for useful discussions and for showing us the proof of a semiclassical version of Theorem \ref{thm1}. The second two authors are supported by FCT/Portugal through the projects PEst-OE/EEI/LA009/2013, EXCL/MAT-GEO/0222/2012, \- PTDC/MAT/\-119689/2010, PTDC/MAT/1177762/2010. The second author is thankful for generous support from the Emerging Field Project on Quantum Geometry from Erlangen--N\"urnberg University.

\providecommand{\bysame}{\leavevmode\hbox to3em{\hrulefill}\thinspace}

\end{document}